\theoremstyle{plain}
\theoremstyle{definition}
\newtheorem{theorem}{Theorem}%[section]
\newtheorem{lemma}[theorem]{Lemma}
\newtheorem{corollary}[theorem]{Corollary}
\newtheorem{definition}[theorem]{Definition}
\newtheorem{example}[theorem]{Example}
\newtheorem{counterexample}[theorem]{Counterexample}
\newtheorem{note}[theorem]{Note}
\newtheorem{convention}[theorem]{Convention}
\newtheorem{remark}[theorem]{Remark}
\theoremstyle{remark}
\numberwithin{equation}{section}
\newcommand{\SP}{\: \: \: \: \:}
\title[Topological stability and shadowing  modulo an ideal]{Topological Stability and Shadowing in Uniform Transformation Semigroups Modulo an Ideal}
\author[F. Ayatollah Zadeh Shirazi, E. Hakimi, A. Hosseini, Kh. Tajbakhsh]{Fatemah Ayatollah Zadeh Shirazi, Elaheh Hakimi \\ Arezoo Hosseini,  Khosro Tajbakhsh}
\begin{document}
%%%%%%%%%%%%%%%%%%%%%%%%%%%%%%%%%%%% abstract
\begin{abstract}

In this paper, we introduce and analyze several key dynamical properties—namely shadowing modulo an ideal, expansivity modulo an ideal, and topological stability modulo an ideal—within the framework of uniform transformation semigroups. Given an ideal $\mathcal I$ on semigroup $T$, we investigate the interplay between these properties in compact Hausdorff transformation semigroup $(T,X,\mathfrak{X})$. Our main result establishes that if a compact Hausdorff transformation semigroup exhibits the shadowing property modulo $\mathcal I$ and is expansive modulo $\mathcal I$ then it is also topologically stable modulo $\mathcal I$. This extends known stability theorems in classical topological dynamics to the setting of ideal-constrained dynamics. Additionally, we explore the relationship between shadowing modulo $\mathcal I$ and the conventional shadowing property.
\end{abstract}
\maketitle
%%%%%%%%%%%%%%%%%%%%%%%%%%%%%%%%%%%% MSC
\noindent {\small {\bf 2020 Mathematics Subject Classification:} 54H15, 37B52 \\
{\bf Keywords:}} Ideal, Shadowing, Topological stability, Transformation semigroup, Uniform space.
%%%%%%%%%%%%%%%%%%%%%%%%%%%%%%%%%%%%
\section{Introduction}
\noindent The study of shadowing properties and their relationship with topological stability has been a central topic in dynamical systems research, as evidenced by several key works \cite{lee, pilyugin, walters, yan, yang}. Shadowing, which describes the ability of a system to approximate pseudo-orbits with true orbits, plays a crucial role in understanding the stability and robustness of dynamical systems under perturbations. On the other hand, topological stability—a property ensuring that a system remains conjugate to its small perturbations—has deep connections with structural stability and hyperbolicity. Investigating the interplay between these two concepts has led to significant insights into the long-term behavior of dynamical systems.
\\ \\
Parallel to these developments, the application of ideals and Furstenberg families has provided a powerful framework for refining classical dynamical notions, as demonstrated in \cite{attar, ri, shao, wang}. By incorporating combinatorial and set-theoretic techniques, these tools have enabled finer classifications of recurrence, transitivity, and other dynamical properties.
\\ \\
In this paper, we bridge these two lines of research by examining the interaction between shadowing and topological stability through the lens of ideals. Our main results, obtained in 2024, focus on topological stability within the setting of uniform transformation groups. Notably, we extend previous work by considering not only finitely generated phase groups but also more general phase groups, thereby broadening the scope of applicability.
\\ \\
Originally, our approach did not explicitly involve ideals; however, during the final stages of preparation, we encountered \cite{cec}, which motivated us to refine and enrich our framework using ideal-theoretic methods. Although our core proofs were developed independently, this perspective allowed us to unify and strengthen our results, offering new connections between shadowing and stability.
\\ \\
The paper is organized as follows. Section 2 provides the necessary background, reviewing key definitions and preliminary results. In Sections 3 through 5, we develop the fundamental definitions and present our main theorems concerning shadowing(modulo an ideal) properties. Section 6 introduces the concept of expansive modulo an ideal, accompanied by a counterexample demonstrating that this notion is not equivalent to classical expansive. Finally, Section 7 establishes our central result: for compact Hausdorff transformation semigroups, the conjunction of shadowing and expansivity modulo an ideal implies topological stability modulo the same ideal.
%%%%%%%%%%%%%%%%%%%%%%%%%%%%%%%%%%%%
\section{Preliminaries}
%%%%%%%%%%%%%%%%%%%%%%%%%%%%%%%%%%%%
\subsection{Background on transformation semigroups.}
By a (left topological) transformation semigroup $(T,X,\rho)$ or simply $(T,X)$, we mean a topological space $X$ (phase space), discrete topological semigroup $T$ (phase semigroup) with identity element $e$
and continuous map $\mathop{\rho:T\times X\to X}\limits_{\SP\SP(t,x)\mapsto tx}$ (action map) such that
\begin{itemize}
\item $ex=x$ for all $x\in X$,
\item $(ts)x=t(sx)$ for all $x\in X$, and $s,t\in T$.
\end{itemize}
If $T$ is a discrete topological group, then we call the transformation semigroup $(T,X)$, a transformation group.
\\
In transformation semigroup (resp. group) $(T,X,\rho)$ for each $t\in T$ the map $\rho^t:X\to X$
such that $\rho_t(x)=tx$ ($x\in X$) is continuous (resp. homeomorphism).
Let's denote the collection of all actions of semigroup $T$ on $X$ by $Act(T,X)$,
\\
For more details on transformation semigroups see~\cite{ellis, kawa}.
%%%%%%%%%%%%%%%%%%%%%%%%%%%%%%%%%%%%
\begin{note}\label{taha10}
Every dynamical system can be considered as a transformation semigroup, in the following way: one may consider continuous self--map
$f:X\to X$ as transformation semigroup $(\mathbb{N}\cup\{0\},X,\rho_f)$ where $\rho_f(n,x)=f^n(x)$ for all $x\in X$ and $n\geq0$, also
one may consider homeomorphism $g:X\to X$ as transformation group $(\mathbb{Z},X,\varrho_g)$ where $\varrho_g(n,x)=g^n(x)$
for all $x\in X$ and $n\in\mathbb{Z}$.
\\
So the collection of all continuous self--maps on topological space $X$ is in one to one correspondence
with $Act(\mathbb{N}\cup\{0\},X)$.
Also the collection of all self--homeomorphisms on topological space $X$ is
in one to one correspondence with $Act(\mathbb{Z},X)$.
\end{note}
%%%%%%%%%%%%%%%%%%%%%%%%%%%%%%%%%%%%
\subsection{Background on uniform spaces}
Suppose $\mathcal K$ is a collection of subsets of $X\times X$ such that (suppose $\Delta_X=\{
(x,x):x\in X\}$):
\begin{itemize}
\item $\forall\alpha\in\mathcal{K}\:(\Delta_X\subseteq\alpha)$,
\item $\forall\alpha,\beta\in\mathcal{K}\:(\alpha\cap\beta\in\mathcal{K})$,
\item $\forall\alpha\in\mathcal{K}\:\forall\beta\subseteq X\times X\:(\alpha\subseteq\beta\Rightarrow\beta\in\mathcal{K})$,
\item $\forall\alpha\in\mathcal{K}\:(\alpha^{-1}\in\mathcal{K})$,
\item $\forall\alpha\in\mathcal{K}\:\exists\beta\in\mathcal{K}\:(\beta\circ\beta\subseteq\alpha)$,
\end{itemize}
where for $\lambda,\mu\subseteq X\times X$ we have $\lambda^{-1}=\{(y,x):(x,y)\in\lambda\}$
and $\lambda\circ\mu=\{(x,z):\exists y\:((x,y)\in\mu\wedge(y,z)\in\lambda)\}$, then we call $\mathcal{K}$ a uniform structure on $X$.
\\
If $\mathcal{K}$ is a uniform structure on $X$, we call $(X,\mathcal{K})$ a uniform space. In uniform space $(X,\mathcal{K})$
for each $\alpha\in\mathcal{K}$ and $x\in X$ let $\alpha[x]:=\{y:(x,y)\in\alpha\}$, then
$\tau_{\mathcal K}:=\{U\subseteq X:\forall x\in U\:\exists\alpha\in\mathcal{K}\:(\alpha[x]\subseteq U)\}$ is uniform topology on $X$
(arised from uniform structure $\mathcal K$). Equip uniform space $(X,\mathcal{K})$ with uniform topology $\tau_{\mathcal K}$.
\\
In uniform spaces $(X,\mathcal{K}),(Y,\mathcal{H})$ we call $f:(X,\mathcal{K})\to(Y,\mathcal{H})$ uniform
continuous if for each $\alpha\in\mathcal{H}$ there exists $\beta\in\mathcal{K}$ such that
$f(\beta):=\{(f(x),f(y)):(x,y)\in\beta\}\subseteq\alpha$.
\\
The topological space $Z$ is called uniformizable with compatible uniform structure $\mathcal D$, if
$\mathcal D$ is a uniform structure on $Z$ and uniform topology $\tau_\mathcal{D}$ coincides with the topology
of $Z$. Note that every compact uniform space has a unique compatible
uniform structure (see e.g.~\cite{gal}), moreover every
compact Hausdorff topological space $Z$ is uniformizable
with unique compatible uniform structure
$\{\alpha\subseteq Z\times Z:\Delta_Z\subseteq\alpha^\circ\}$.
Also every metric space $(Z,d)$ is uniformizable with compatible uniform structure
$\mathcal{K}_d:=\{\alpha\subseteq Z\times Z:\exists\varepsilon>0\:(\{(x,y)\in Z\times Z:d(x,y)<\varepsilon\}\subseteq\alpha)\}$.
\\
The net $\{x_i\}_{i\in I}$ in uniform space $(X,\mathcal{K})$
is a Cauchy net if for each $\alpha\in\mathcal{K}$ there exists $j\in I$ such that
for all $p,q\geq j$ we have $(x_p,x_q)\in\alpha$. The uniform space $(X,\mathcal{K})$
is complete if each Cauchy net in $X$ is a convergent net. Every compact uniform space is complete.
\\
For more details on uniform spaces see~\cite{dug, engel}.
%%%%%%%%%%%%%%%%%%%%%%%%%%%%%%%%%%%%
\subsection{Background and preliminaries on uniform convergence topology}
%%%%%%%%%%%%%%%%%%%%%%%%%%%%%%%%%%%%
In uniform space $(X,\mathcal{K})$, let's denote
\begin{itemize}
\item the collection of all self--maps on $X$ by $F(X)$,
\item the collection of all continuous self--maps on $X$ by $C(X)$,
\item the collection of all self--homeomorphisms on $X$ by $Homeo(X)$.
\end{itemize}
Also for each $\theta\in\mathcal{K}$ let:
\[B_\theta:=\{(f,g)\in F(X)\times F(X):\forall x\in X\:\:(f(x),g(x))\in\theta\}\:,\]
then
\[\mathcal{H}_{\mathcal K}:=\{\alpha\subseteq F(X)\times F(X):\exists\lambda\in\mathcal{K}\:\:B_\lambda\subseteq\alpha\}\]
is a uniform structure on $F(X)$.
For $M\subseteq F(X)$, consider $M$ with induced uniformity
$\mathcal{H}_{\mathcal K}^M:=\{\alpha\cap(M\times M):\alpha\in \mathcal{H}_{\mathcal K}\}$ which is compatible with subspace
topology on $M$ (inherited from $(F(X),\mathcal{H}_{\mathcal K})$).
\\
It is known that uniform convergence topology on $F(X)$ provided by $\mathcal{H}_{\mathcal K}$, i.e., a net
	$\{f_i\}_{i\in I}$ converges to $f$ in $(F(X),\mathcal{H}_{\mathcal K})$ if and only if
	$\{f_i\}_{i\in I}$ converges to $f$ uniformly on $(X,\mathcal{K})$. So one may call uniform topology induced by $\mathcal{H}_{\mathcal K}$ as uniform convergence topology. More details are in the following notes.
%%%%%%%%%%%%%%%%%%%%%%%%%%%%%%%%%%%%
\begin{note}\label{taha15}
In uniform space $(X,\mathcal{K})$, uniform convergence topology on $F(X)$ coincides with uniform
topology of $(F(X), \mathcal{H}_{\mathcal K})$.
\end{note}
%%%%%%%%%%%%%%%%%%%%%%%%%%%%%%%%%%%%
\begin{proof}
Consider $f\in F(X)$ and a net $\{f_i\}_{i\in I}$ in $F(X)$.
\\
First suppose $\{f_i\}_{i\in I}$ converges to $f$ in uniform space $(F(X),\mathcal{H}_\mathcal{K})$. For each
$\theta\in\mathcal{K}$, $f$ is an interior point of $B_\theta[f]$, thus there exists $j\in I$ such that
$f_i\in B_\theta[f]$ for each $i\geq j$, i.e. $(f,f_i)\in B_\theta$ for each $i\geq j$, thus $(f(x),f_i(x))\in \theta$
for each $i\geq j$ and $\theta\in\mathcal{K}$. Hence
\begin{equation}\label{uniform}
\forall\theta\in\mathcal{K}\:\exists j\in I\:\forall i\geq j\:\forall x\in X\: (f(x),f_i(x))\in \theta
\end{equation}
so $\{f_i\}_{i\in I}$ converges uniformly to $f$.
\\
Now conversely, suppose $\{f_i\}_{i\in I}$ converges uniformly to $f$. For each open neighbourhood
$U$ of $f$ there exists $\alpha\in\mathcal{H}_\mathcal{K}$
such that $\alpha[f]\subseteq U$. There exists $\lambda\in\mathcal{K}$ such that $B_\lambda\subseteq\alpha$. Since $\{f_i\}_{i\in I}$ converges uniformly to $f$, there exists $s\in I$ such that $(f(x),f_i(x))\in \lambda$
for all $i\geq s,x\in X$, so $f_i\in B_\lambda[f]\subseteq \alpha[f]\subseteq U$ for each $i\geq s$. Therefore
$(f_i)_{i\in I}$ converges to $f$ in uniform space $(F(X),\mathcal{H}_\mathcal{K})$.
\end{proof}
%%%%%%%%%%%%%%%%%%%%%%%%%%%%%%%%%%%%
%%%%%%%%%%%%%%%%%%%%%%%%%%%%%%%%%%%%
%%%%%%%%%%%%%%%%%%%%%%%%%%%%%%%%%%%%
\begin{note}\label{taha20}
Consider uniform space $(X,\mathcal{K})$, self--map $g:X\to X$ and $L_g,R_g:F(X)\to F(X)$ with $L_g(f)=gf, R_g(f)=fg$ ($f\in F(X)$),
also equip $F(X)$ by uniform convergence topology. The following statements are valid:
\\
a. $C(X)$ is a closed subset of $(F(X),\mathcal{H}_{\mathcal K})$,
\\
b. the map $R_g:F(X)\to F(X)$ is continuous,
\\
c. if $g:X\to X$ is uniform continuous, then $L_g\restriction_{C(X)}:C(X)\to C(X)$ is continuous.
\end{note}
%%%%%%%%%%%%%%%%%%%%%%%%%%%%%%%%%%%%
\begin{proof}
{\bf a)} Suppose $f\in\overline{C(X)}$, then by Note~\ref{taha15} there exists a net $\{f_i\}_{i\in I}$ in $C(X)$
which converges uniformly to $X$.
Uniform limit of any continuous functions is continuous, thus $f\in C(X)$ and
$C(X)$ is a closed subset of $F(X)$.
\\
{\bf b)} If $\{f_i\}_{i\in I}$ is a net in $F(X)$ converges to $f$, then by Note~\ref{taha15}, it converges
uniformly to $f$, and we have~\ref{uniform}, thus
\[\forall\theta\in\mathcal{K}\:\exists j\in I\:\forall i\geq j\:\forall x\in X\: (f(g(x)),f_i(g(x)))\in \theta\]
i.e. $\{f_ig\}_{i\in I}$ converges to $fg$ uniformly. Use again Note~\ref{taha15} to complete the proof.
\end{proof}
%%%%%%%%%%%%%%%%%%%%%%%%%%%%%%%%%%%%
\begin{note}\label{taha30}
In Note~\ref{taha20} if $(X,\mathcal{H})$ is compact, then:
\\
a. $(C(X),\mathcal{H}_{\mathcal K}^{C(X)})$
	is a topological semigroup,
\\
b. $(Homeo(X),\mathcal{H}_{\mathcal K}^{Homeo(X)})$ is a topological group,
\\
c. if $X$ is Hausdorff too, then $(F(X),\mathcal{H}_{\mathcal K})$ and
	$(C(X),\mathcal{H}_{\mathcal K}^{C(X)})$ are complete.
\end{note}
%%%%%%%%%%%%%%%%%%%%%%%%%%%%%%%%%%%%
\begin{proof}
{\bf a)} Suppose $\{(f_i,g_i)\}_{i\in I}$ is a net in $C(X)\times C(X)$ converges to
$(f,g)$. Thus $\{f_i\}_{i\in I}$ converges uniformly to $f$ and
$\{g_i\}_{i\in I}$ converges uniformly to $g$.
Consider $\alpha\in \mathcal{H}_{\mathcal K}^{C(X)}$ there exists
$\beta^{-1}=\beta\in \mathcal{H}_{\mathcal K}^{C(X)}$ such that
$\beta\circ\beta\subseteq\alpha$. There exists $j_1\in I$ such that
\[\forall i\geq j_1\:\forall x\in X\: (f_i(x),f(x))\in\beta\:,\]
therefore
\begin{equation}\label{eq50}
\forall i\geq j_1\:\forall x\in X\: (f_i(g_i(x)),f(g_i(x)))\in\beta\:.
\end{equation}
The uniform space
$X$ is compact and $f:X\to X$ is continuous, thus $f:X\to X$
is uniformly continuous therefore by Note~\ref{taha20} the net
$\{fg_i\}_{i\in I}$ converges uniformly to $fg$. There exists $j_2\in I$ such that:
\begin{equation}\label{eq55}
\forall i\geq j_2\:\forall x\in X\: (f(g_i(x)),f(g(x)))\in\beta\:.
\end{equation}
$I$ is a directed set, there exists $j\in I$ such that $j\geq j_1$ and $j\geq j_2$,
by~\ref{eq50} and~\ref{eq55}
\[\forall i\geq j\: \forall x\in X\:(f_i(g_i(x)),f(g(x)))\in\beta\circ\beta\subseteq\alpha\:\]
Hence $\{f_ig_i\}_{i\in I}$ converges uniformly to $fg$, thus
$\mathop{C(X)\times C(X)\to C(X)}\limits_{\SP\SP\SP(h,k)\mapsto hk}$ is continuous and
$C(X)$ is a topological semigroup.
\\
{\bf b)} Suppose $\{f_i\}_{i\in I}$ is a net in $Homeo(X)$ converges uniformly to
$f\in Homeo(X)$. Consider $\alpha\in\mathcal{K}$, since $f^{-1}:X\to X$ is continuous and
$X$ is a compact uniform space, $f^{-1}:X\to X$ is uniformly continuous.
Hence there exists
$\beta\in\mathcal{K}$ such that $\{(f^{-1}(x),f^{-1}(y)):(x,y)\in\beta\}\subseteq\alpha$.
There exists $j\in I$ such that
\[\forall i\geq j\:\forall x\in X\:(f_i(x),f(x))\in\beta\]
thus
\[\forall i\geq j\:\forall x\in X\:(x,f(f_i^{-1}(x)))=(f_i(f_i^{-1}(x)),f(f_i^{-1}(x)))\in\beta\]
hence
\[\forall i\geq j\:\forall x\in X\:((f^{-1}(x),f_i^{-1}(x))=(f^{-1}(x),f^{-1}(f(f_i^{-1}(x))))\in\alpha\:.\]
thus $\{f_i^{-1}\}_{i\in I}$ converges uniformly to $f^{-1}$. Therefore
$\mathop{Homeo(X)\to Homeo(X)}\limits_{\SP h\mapsto h^{-1}}$ is continuous, use (a)
to complete the proof of (b).
\\
{\bf c)} Suppose $X$ is compact Hausdorff, thus for each $\beta\in\mathcal{K}$ there exists
$\alpha\in\mathcal{K}$ such that $\overline{\alpha}=\alpha\subseteq\beta$.
If $\{f_i\}_{i\in I}$ is a Cauchy net in $F(X)$, then for each $x\in X$, $\{f_i(x)\}_{i\in I}$ is
a Cauchy net in $X$, and converges to a point of $X$ like $f(x)$. For each
$\theta\in\mathcal{H}_\mathcal{K}$ there exists $\overline{\alpha}=\alpha\in\mathcal{K}$ such that
$B_\alpha\subseteq\theta$. There exists $j\in I$ such that $(f_p,f_q)\in B_\alpha$ for each $p,q\geq j$.
Thus $(f_p(x),f_q(x))\in \alpha$ for each $p,q\geq j$ and $x\in X$. Hence
$(f_p(x),f(x))=\mathop{\lim}\limits_{q\in I}(f_p(x),f_q(x))\in\overline{\alpha}=\alpha$ for each
$q\geq j$ and $x\in X$. Hence $(f_p,f)\in B_\alpha\subseteq\theta$ for each $p\geq j$ which shows
convergent of $\{f_i\}_{i\in I}$ (to $f$).
\end{proof}
%%%%%%%%%%%%%%%%%%%%%%%%%%%%%%%%%%%%
\subsection{Other notations and conventions}
Suppose $\mathcal{I}$ is a collection of subsets of $M$, we say $\mathcal I$ is
an ideal on $M$ if
\begin{itemize}
\item $\mathcal{I}\neq\varnothing$,
\item $\forall A,B\in\mathcal{I}\:(A\cup B\in\mathcal{I})$,
\item $\forall A\in\mathcal{I}\:\forall B\subseteq A\:(B\in\mathcal{I})$.
\end{itemize}
\noindent $\mathcal{P}_{fin}(M)$ denotes the ideal of finite subsets of $M$, $\mathcal{P}(M)$
(power set of $M$) is the biggest ideal on $M$, and $\{\varnothing\}$ is the smallest ideal on $M$.
\\
In transformation semigroup $(T,X,\rho)$ (or simply $(T,X)$) with uniformizable phase space
$X$, sometimes chosen compatible uniform structure is important, so if we want to make
emphasis on compatible uniform structure $\mathcal K$ on $X$, we use notation
$(T,(X,\mathcal{K}),\rho)$ (or simply $(T,(X,\mathcal{K}))$) too. 
Whenever either $\mathcal K$ is clear from text or $X$ has just one compatible uniform structure like $\mathcal K$ (e.g., when $X$ is compact Hausdorff), instead of writing
$(T,(X,\mathcal{K}))$ we write briefly
$(T,X)$.
%%%%%%%%%%%%%%%%%%%%%%%%%%%%%%%%%%%%
\begin{convention}
In this text consider uniform transformation semigroup $(T,(X,\mathcal{K}),\rho)$
and ideal $\mathcal I$ on $T$. 
\end{convention}
%%%%%%%%%%%%%%%%%%%%%%%%%%%%%%%%%%%%
\section{Shadowing with respect to a set}
%%%%%%%%%%%%%%%%%%%%%%%%%%%%%%%%%%%%
\noindent In this section we pay attention to interaction between ``having shadowing
property with respect to $A$'' for different nonempty subsets $A$ of phase semigroup
and their interaction. In compact uniform transformation (semi)group
$(T,(Z,\mathcal{K}))$ for each nonempty subset $A$ of $Z$ and each $s$
belongs to sub(semi)group generated by $A$ we prove $(T,(Z,\mathcal{K}))$
has shadowing property with respect to $A$ if and only if it has shadowing property
with respect to $A\cup\{s\}$.
%%%%%%%%%%%%%%%%%%%%%%%%%%%%%%%%%%%%
\begin{definition}\label{salam110}
In uniform transformation semigroup $(T,(X,\mathcal{K}))$ for
a sequence $(x_t)_{t\in T}$ in
$X$, $\theta\in\mathcal{K}$, $x\in X$ and
nonempty subset $A$ of $T$ we say:
\begin{itemize}
\item $(x_t)_{t\in T}$ is a $\theta-$pseudo orbit (with respect to $A$) if:
\[\forall a\in A\:\forall t\in T\:((ax_t,x_{at})\in\theta)\:,\]
\item $x$ is a $\theta-$trace of $(x_t)_{t\in T}$ if:
\[\forall t\in T\:((tx,x_{t})\in\theta)\:.\]
\end{itemize}
For nonempty subset $A$ of $T$, we say the uniform transformation semigroup $(T,(X,\mathcal{K}))$ has shadowing property with resprct to $A$
if for each $\alpha\in\mathcal{K}$ there exists
$\beta\in\mathcal{K}$ such that every $\beta-$pseudo orbit with respect to $A$ has an $\alpha-$trace.
\end{definition}
%%%%%%%%%%%%%%%%%%%%%%%%%%%%%%%%%%%%
\begin{example}\label{salam120}
For each $\alpha\in\mathcal{K}$, every sequence $(x_t)_{t\in T}$ is an
$\alpha-$pseudo orbit with respect to $\{e\}$.
\\
Moreover, $(T,(X,\mathcal{K}))$ has shadowing property with respect to $T$. For each $\alpha\in\mathcal{K}$, if $(x_t)_{t\in T}$ is an
$\alpha-$pseudo orbit with respect to $T$, then $x_e$ is an $\alpha-$trace of $(x_t)_{t\in T}$.
\end{example}
%%%%%%%%%%%%%%%%%%%%%%%%%%%%%%%%%%%%
\begin{example}\label{salam140}
If $\varnothing\neq B\subseteq A\subseteq T$ and
$(T,(X,\mathcal{K}))$ has shadowing property with respect to $B$, then it has shadowing property with respect to $A$.
\end{example}
%%%%%%%%%%%%%%%%%%%%%%%%%%%%%%%%%%%%
\begin{corollary}\label{salam300}
The following statements are equivalent:
\\
a) there exists $A\in\mathcal{I}$ such that $(T,X)$ has shadowing property
	with respect to $A$,
\\
b) for each $C\in\mathcal{I}$
	there exists $B\in\mathcal{I}$ such that $C\subseteq B$ and
	$(T,X)$ has shadowing property
	with respect to $B$.
\end{corollary}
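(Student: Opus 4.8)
The plan is to prove both implications directly, relying only on the ideal axioms (nonemptiness, closure under finite unions, downward closure) together with the monotonicity of the shadowing property recorded in Example~\ref{salam140}. No limiting or uniform-structure argument is needed here; the statement is purely set-theoretic bookkeeping on the index side.

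For the implication (b)$\Rightarrow$(a), I would instantiate the universally quantified clause in (b) at a single convenient member of $\mathcal I$. Since an ideal is nonempty and downward closed, we have $\varnothing\in\mathcal I$, so applying (b) with $C=\varnothing$ produces some $B\in\mathcal I$ for which $(T,X)$ has shadowing with respect to $B$. Setting $A=B$ then yields (a) at once. The only point needing a word of care is that shadowing with respect to a set is defined in Definition~\ref{salam110} only for \emph{nonempty} subsets, so the very assertion that $(T,X)$ has shadowing with respect to $B$ forces $B\neq\varnothing$; I would flag this so that $A=B$ is a legitimate witness.

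For the converse (a)$\Rightarrow$(b), fix the witness $A\in\mathcal I$ from (a), which is nonempty and carries the shadowing property, and let $C\in\mathcal I$ be arbitrary. The natural candidate is $B:=A\cup C$. Closure of $\mathcal I$ under finite unions gives $B\in\mathcal I$, and $C\subseteq B$ is immediate. Since $A\subseteq B$ and $A\neq\varnothing$, Example~\ref{salam140} (enlarging the reference set preserves the shadowing property) shows that $(T,X)$ has shadowing with respect to $B$, which is precisely the conclusion of (b).

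I expect no genuine obstacle: the argument is two short instantiations of the hypotheses plus one appeal to Example~\ref{salam140}. The subtlest issue is the bookkeeping around nonemptiness—guaranteeing that every set to which we attribute shadowing lies in the domain of Definition~\ref{salam110}. This is automatic, because any witnessing set is nonempty by the hypothesis of that definition, and a union containing the nonempty set $A$ is again nonempty.
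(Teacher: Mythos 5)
Your proposal is correct and follows essentially the same route as the paper: the substantive direction (a)$\Rightarrow$(b) is handled identically, by setting $B:=A\cup C$ and invoking the monotonicity of Example~\ref{salam140}. The paper leaves the converse implicit (it is a trivial instantiation of (b)); your explicit treatment of it, including the care about $\varnothing\in\mathcal{I}$ and nonemptiness of the witnessing set, is sound and adds nothing beyond bookkeeping.
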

%%%%%%%%%%%%%%%%%%%%%%%%%%%%%%%%%%%%
\begin{proof} ``(a) $\Rightarrow$ (b)'' Suppose there exists $A\in\mathcal{I}$ 
such that $(T,X)$ has shadowing property with respect to $A$. Choose $C\in\mathcal{I}$,
by Example~\ref{salam140}, $(T,X)$ has shadowing property with respect to $B:=A\cup C$.
\end{proof}
%%%%%%%%%%%%%%%%%%%%%%%%%%%%%%%%%%%%
\begin{lemma}\label{salam150}
Suppose $\varnothing\neq A\subseteq T$.
$(T,(X,\mathcal{K}))$ has shadowing property with respect to $A$ if and only if it
has shadowing property with respect to $A\cup\{e\}$.
\end{lemma}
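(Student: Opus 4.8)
The plan is to show that adjoining $e$ to $A$ does not alter the class of pseudo orbits at all, so that the two shadowing properties are in fact the very same condition. The forward implication is immediate: Example~\ref{salam140}, applied to the inclusion $A \subseteq A \cup \{e\}$, yields at once that shadowing with respect to $A$ implies shadowing with respect to $A \cup \{e\}$. So the only thing that needs attention is the reverse implication, and even that will turn out to be essentially automatic.

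For the reverse implication I would compare the two notions of $\beta$-pseudo orbit directly. A sequence $(x_t)_{t\in T}$ is a $\beta$-pseudo orbit with respect to $A \cup \{e\}$ exactly when $(a x_t, x_{at}) \in \beta$ for all $t \in T$ and all $a \in A \cup \{e\}$. The only extra constraint compared with being a $\beta$-pseudo orbit with respect to $A$ is the clause indexed by $a = e$, namely $(e x_t, x_{et}) \in \beta$ for every $t \in T$. Since $e$ is the identity element we have $e x_t = x_t$ and $x_{et} = x_t$, so this clause reduces to $(x_t, x_t) \in \beta$, which holds automatically because $\Delta_X \subseteq \beta$ for every $\beta \in \mathcal{K}$. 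This is precisely the phenomenon already recorded in Example~\ref{salam120}.

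Consequently, for a fixed $\beta \in \mathcal{K}$ the collection of $\beta$-pseudo orbits with respect to $A$ coincides verbatim with the collection of $\beta$-pseudo orbits with respect to $A \cup \{e\}$. Since the $\alpha$-trace condition makes no reference whatsoever to the indexing set, the assertion ``every $\beta$-pseudo orbit with respect to $A$ has an $\alpha$-trace'' is literally the same statement as ``every $\beta$-pseudo orbit with respect to $A \cup \{e\}$ has an $\alpha$-trace''. Hence, for each $\alpha \in \mathcal{K}$, a witness $\beta$ for one property is at the same time a witness for the other, and the two shadowing properties are equivalent.

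I do not expect a genuine obstacle here: the entire content is the single observation that the $a = e$ clause in the definition of a pseudo orbit is vacuous. The only points requiring care are the standing bookkeeping facts that $e$ acts as the identity on $X$ and that every entourage contains the diagonal $\Delta_X$, both of which are built into the definitions set up in Section~2 and in Definition~\ref{salam110}.
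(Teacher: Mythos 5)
Your proposal is correct and is essentially the paper's own argument: the proof given there consists precisely of the observation that for each $\beta\in\mathcal{K}$ the $\beta$-pseudo orbits with respect to $A$ and with respect to $A\cup\{e\}$ coincide, which is what you establish (with the additional, welcome, detail that the $a=e$ clause reduces to $(x_t,x_t)\in\beta$ and $\Delta_X\subseteq\beta$). Nothing further is needed.
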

%%%%%%%%%%%%%%%%%%%%%%%%%%%%%%%%%%%%
\begin{proof}
Use the fact that for each $\beta\in\mathcal{K}$, a sequence $(x_t)_{t\in T}$ is a $\beta-$pseudo orbit with respect to $A$ if and only if
it is a $\beta-$pseudo orbit with respect to $A\cup\{e\}$.
\end{proof}
%%%%%%%%%%%%%%%%%%%%%%%%%%%%%%%%%%%%
\begin{lemma}\label{salam160}
Suppose $X$ is compact and $r,s\in A\subseteq T$.
Then $(T,(X,\mathcal{K}))$ has shadowing property with respect to $A$
if and only if it
has shadowing property with respect to $A\cup\{sr\}$.
\end{lemma}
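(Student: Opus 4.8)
The plan is to treat the two implications separately, since one of them is essentially free. Because $A\subseteq A\cup\{sr\}$, enlarging the index set only \emph{adds} conditions to the pseudo-orbit requirement of Definition~\ref{salam110}: every $\beta$-pseudo orbit with respect to $A\cup\{sr\}$ is in particular a $\beta$-pseudo orbit with respect to $A$. Hence the implication ``shadowing with respect to $A$ $\Rightarrow$ shadowing with respect to $A\cup\{sr\}$'' is an immediate instance of Example~\ref{salam140} (take the smaller set to be $A$ and the larger set to be $A\cup\{sr\}$). Note also that if $sr\in A$ then $A\cup\{sr\}=A$ and there is nothing to prove, so one may assume $sr\notin A$.

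The substantive direction is the converse: ``shadowing with respect to $A\cup\{sr\}$ $\Rightarrow$ shadowing with respect to $A$.'' Given $\alpha\in\mathcal{K}$, I would first extract from the hypothesis an entourage $\beta'\in\mathcal{K}$ such that every $\beta'$-pseudo orbit with respect to $A\cup\{sr\}$ has an $\alpha$-trace. The goal is then to manufacture a finer $\beta\in\mathcal{K}$ with the property that every $\beta$-pseudo orbit with respect to $A$ is automatically a $\beta'$-pseudo orbit with respect to $A\cup\{sr\}$; granting this, such an orbit inherits an $\alpha$-trace, which is exactly shadowing with respect to $A$ with witness $\beta$.

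The crux is to show that the single extra constraint indexed by $sr$ follows from those indexed by $r$ and by $s$. Writing $(sr)x_t=s(rx_t)$ and inserting the intermediate point $sx_{rt}$, I would estimate
\[ ((sr)x_t,\,x_{(sr)t})=(s(rx_t),\,x_{(sr)t}) \]
by composing the comparison $(s(rx_t),\,sx_{rt})$ with $(sx_{rt},\,x_{(sr)t})$. The second pair is precisely the $A$-condition for $s$ applied at index $rt$ (using associativity, $(sr)t=s(rt)$). The first pair arises by applying the continuous self-map $\rho^s\colon y\mapsto sy$ to the $A$-condition $(rx_t,\,x_{rt})$ for $r$; this is where compactness of $X$ enters, since it makes $\rho^s$ uniformly continuous, so a $\delta$-close pair is sent to an arbitrarily close pair. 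Concretely I would pick $\eta\in\mathcal{K}$ with $\eta\circ\eta\subseteq\beta'$, then $\delta\in\mathcal{K}$ with $\{(sy,sz):(y,z)\in\delta\}\subseteq\eta$ by uniform continuity of $\rho^s$, and set $\beta:=\eta\cap\delta$. For a $\beta$-pseudo orbit $(x_t)_{t\in T}$ with respect to $A$: the conditions for $a\in A$ hold since $\beta\subseteq\eta\subseteq\eta\circ\eta\subseteq\beta'$ (the inclusion $\eta\subseteq\eta\circ\eta$ holding because $\Delta_X\subseteq\eta$); for $a=sr$, the relation $(rx_t,x_{rt})\in\beta\subseteq\delta$ gives $(s(rx_t),sx_{rt})\in\eta$, while $(sx_{rt},x_{(sr)t})\in\beta\subseteq\eta$, so composing yields $((sr)x_t,x_{(sr)t})\in\eta\circ\eta\subseteq\beta'$.

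The main obstacle is the bookkeeping of the entourage compositions hand-in-hand with the correct invocation of uniform continuity of $\rho^s$: one must split $\beta'$ through a single composition $\eta\circ\eta$, route the ``$s$ applied to the $r$-error'' term through $\delta$, and keep straight that the plain $A$-conditions survive into $\beta'$-conditions via $\eta\subseteq\eta\circ\eta$. Everything else is routine, and compactness of $X$ is used in exactly one place, namely to upgrade continuity of $\rho^s$ to uniform continuity.
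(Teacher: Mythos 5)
Your proposal is correct and follows essentially the same route as the paper's proof: the easy direction via Example~\ref{salam140}, and for the converse, inserting the intermediate point $sx_{rt}$, using compactness to get uniform continuity of $\rho^s$, and taking the witness entourage to be the intersection of an entourage $\eta$ with $\eta\circ\eta\subseteq\beta'$ and its pullback under $s$ (your $\eta,\delta,\beta$ correspond exactly to the paper's $\psi,\lambda,\theta$). Your explicit remark that the plain $A$-conditions survive because $\Delta_X\subseteq\eta$ implies $\eta\subseteq\eta\circ\eta$ is a small point the paper leaves implicit, but the argument is the same.
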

%%%%%%%%%%%%%%%%%%%%%%%%%%%%%%%%%%%%
\begin{proof}
Suppose $(T,(X,\mathcal{K}))$ has shadowing property with respect to $A\cup\{sr\}$
and consider $\alpha\in\mathcal{K}$.
There exists $\beta\in\mathcal{K}$ such that each $\beta-$pseudo orbit with respect to $A\cup\{sr\}$ has an $\alpha-$trace.
Choose $\psi\in\mathcal{K}$ such that $\psi\circ\psi\subseteq\beta$. Since $s$ is a continuous self--map on compact uniform space
$(X,\mathcal{K})$, it is uniformly continuous too. So there exists $\lambda\in\mathcal{K}$ such that $\{(sx,sy):(x,y)\in\lambda\}\subseteq\psi$.
Let $\theta=\lambda\cap\psi$, then $\theta\in\mathcal{K}$.
\\
Suppose $(x_t)_{t\in T}$ is a $\theta-$pseudo orbit with respect to $A$, then
for each $a\in A$ and $t\in T$,
$(ax_t,x_{at})\in\theta\subseteq\beta$. In particular for each
$t\in T$, $(sx_{rt},x_{srt})\in\theta\subseteq\psi$ and
$(rx_t,x_{rt})\in\theta\subseteq\lambda$ hence $(srx_t,sx_{rt})\in\psi$
so $(srx_t,x_{srt})\in\psi\circ\psi\subseteq\beta$.
Therefore $(x_t)_{t\in T}$ is a $\beta-$pseudo orbit with respect to $A\cup\{sr\}$
and has an $\alpha-$trace.
\\
Use Example~\ref{salam140} to complete the proof.
\end{proof}
%%%%%%%%%%%%%%%%%%%%%%%%%%%%%%%%%%%%
\begin{lemma}\label{salam170}
Suppose $X$ is compact, $T$ is group and $s\in A\subseteq T$.
Then $(T,(X,\mathcal{K}))$ has shadowing property with respect to $A$
if and only if it
has shadowing property with respect to $A\cup\{s^{-1}\}$.
\end{lemma}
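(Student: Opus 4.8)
The plan is to mirror the argument of Lemma~\ref{salam160}, replacing the product $sr$ by the inverse $s^{-1}$ and exploiting that in a transformation group every $\rho^t$ is a homeomorphism. One implication is immediate: since $A\subseteq A\cup\{s^{-1}\}$, Example~\ref{salam140} shows that shadowing with respect to $A$ forces shadowing with respect to $A\cup\{s^{-1}\}$. All the content lies in the converse, so I would assume $(T,(X,\mathcal{K}))$ has shadowing property with respect to $A\cup\{s^{-1}\}$ and deduce shadowing property with respect to $A$.

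For the converse, fix $\alpha\in\mathcal{K}$ and use the hypothesis to obtain $\beta\in\mathcal{K}$ such that every $\beta-$pseudo orbit with respect to $A\cup\{s^{-1}\}$ has an $\alpha-$trace; replacing $\beta$ by $\beta\cap\beta^{-1}$ I may assume $\beta$ is symmetric. Because $s^{-1}\colon X\to X$ is a homeomorphism of the compact uniform space $(X,\mathcal{K})$, it is uniformly continuous, so there is $\lambda\in\mathcal{K}$ with $\{(s^{-1}x,s^{-1}y):(x,y)\in\lambda\}\subseteq\beta$. Then set $\theta:=\lambda\cap\beta\in\mathcal{K}$.

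The heart of the argument is to check that every $\theta-$pseudo orbit $(x_t)_{t\in T}$ with respect to $A$ is automatically a $\beta-$pseudo orbit with respect to $A\cup\{s^{-1}\}$. The conditions for $a\in A$ hold at once since $\theta\subseteq\beta$. For the new element $s^{-1}$, I would use $s\in A$ together with the group structure: substituting $t$ by $s^{-1}t$ in $(sx_t,x_{st})\in\theta\subseteq\lambda$ gives $(sx_{s^{-1}t},x_t)\in\lambda$, and applying the uniform continuity of $s^{-1}$ yields $(x_{s^{-1}t},s^{-1}x_t)\in\beta$; symmetry of $\beta$ then gives $(s^{-1}x_t,x_{s^{-1}t})\in\beta$, which is exactly the $s^{-1}-$condition. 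Hence $(x_t)_{t\in T}$ has an $\alpha-$trace, proving shadowing property with respect to $A$.

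The only subtle points — and where I expect the main (though modest) obstacle to lie — are bookkeeping ones: one must choose $\beta$ symmetric so that the pair produced by uniform continuity can be reversed into the required order, and one must invoke the group axioms to guarantee both that $s^{-1}t\in T$ for the index shift and that $s^{-1}$ acts as a genuine homeomorphism, hence is uniformly continuous on the compact space. Unlike Lemma~\ref{salam160}, no iterated composition of entourages is needed here, since a single application of the uniform continuity of $s^{-1}$ suffices.
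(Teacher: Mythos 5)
Your proposal is correct and takes essentially the same route as the paper: both derive the $s^{-1}$-pseudo-orbit condition from the $s$-condition by combining uniform continuity of $s^{-1}$ (from compactness) with the index substitution $t\mapsto s^{-1}t$, and both finish with Example~\ref{salam140}. The only cosmetic difference is bookkeeping of orientation—you symmetrize $\beta$ to reverse the pair, while the paper instead chooses $\lambda$ so that the image under $s^{-1}$ lands in $\beta^{-1}$; these devices are interchangeable.
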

%%%%%%%%%%%%%%%%%%%%%%%%%%%%%%%%%%%%
\begin{proof}
Suppose $(T,(X,\mathcal{K}))$ has shadowing property with respect to $A\cup\{s^{-1}\}$
and consider $\alpha\in\mathcal{K}$.
There exists $\beta\in\mathcal{K}$ such that each $\beta-$pseudo orbit with respect to $A\cup\{s^{-1}\}$ has an $\alpha-$trace.
Since $s^{-1}$ is a continuous self--map on compact uniform space
$(X,\mathcal{K})$, it is uniformly continuous too. So there exists $\lambda\in\mathcal{K}$ such that
$\{(s^{-1}x,s^{-1}y):(x,y)\in\lambda\}\subseteq\beta^{-1}\in\mathcal{K}$.
Let $\theta=\lambda\cap\beta$, then $\theta\in\mathcal{K}$.
\\
Suppose $(x_t)_{t\in T}$ is a $\theta-$pseudo orbit with respect to $A$, then
for each $a\in A$ and $t\in T$, $(ax_t,x_{at})\in\theta\subseteq\beta$. In particular for each $t\in T$,
$(sx_{t},x_{st})\in\theta\subseteq\lambda$ thus
$(x_t,s^{-1}x_{st})=(s^{-1}sx_{t},s^{-1}x_{st})\in\beta^{-1}$. Therefore
$(s^{-1}x_{st},x_t)\in\beta$ for each $t\in T$, hence $(s^{-1}x_{ss^{-1}t},x_{s^{-1}t})=(s^{-1}x_t,x_{s^{-1}t})\in\beta$ for all $t\in T$.
Thus $(x_t)_{t\in T}$ is a $\beta-$pseudo orbit with respect to $A\cup\{s^{-1}\}$
and has an $\alpha-$trace.
\\
Use Example~\ref{salam140} to complete the proof.
\end{proof}
%%%%%%%%%%%%%%%%%%%%%%%%%%%%%%%%%%%%
\begin{theorem}\label{salam180}
Suppose $X$ is compact $\varnothing\neq A\subseteq T$ and $s$ belongs to subsemigroup
generated by $A$. Then $(T,(X,\mathcal{K}))$ has shadowing property with respect to $A$ if and only if it
has shadowing property with respect to $A\cup\{s\}$.
\\
Moreover if $T$ is a group too, then we may suppose $s$
belongs to subgroup
generated by $A$.
\end{theorem}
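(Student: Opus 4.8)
The plan is to reduce everything to the two building-block lemmas already established: Lemma~\ref{salam160}, which enlarges a set by the product of two of its elements without affecting whether shadowing holds, and Lemma~\ref{salam170}, which does the analogous thing for inverses in the group case; together with Example~\ref{salam140}, which says shadowing is inherited when the subset is enlarged. One direction is immediate: since $A\subseteq A\cup\{s\}$, Example~\ref{salam140} gives that shadowing with respect to $A$ implies shadowing with respect to $A\cup\{s\}$. So the whole content lies in the converse.

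For the semigroup case I would write $s=a_1a_2\cdots a_n$ with each $a_i\in A$ (this is exactly what membership of $s$ in the subsemigroup generated by $A$ provides) and argue by induction on the length $n$ of this representation. If $n=1$ then $s\in A$ and $A\cup\{s\}=A$, so there is nothing to prove. For $n\geq 2$ set $s'=a_1\cdots a_{n-1}$; by the induction hypothesis, shadowing with respect to $A$ is equivalent to shadowing with respect to $A\cup\{s'\}$. Now $s'$ and $a_n$ both lie in $A\cup\{s'\}$ and $s=s'a_n$, so Lemma~\ref{salam160} (applicable since $X$ is compact) gives that shadowing with respect to $A\cup\{s'\}$ is equivalent to shadowing with respect to $A\cup\{s',s\}$. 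Chaining these, shadowing with respect to $A$ is equivalent to shadowing with respect to $A\cup\{s',s\}$.

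The last step is to discard the auxiliary element $s'$: sandwich $A\subseteq A\cup\{s\}\subseteq A\cup\{s',s\}$ and apply Example~\ref{salam140}. Indeed, shadowing with respect to $A\cup\{s\}$ implies shadowing with respect to $A\cup\{s',s\}$, which we have just seen is equivalent to shadowing with respect to $A$; combined with the immediate direction this yields the desired equivalence for $A\cup\{s\}$. I expect this bookkeeping — guaranteeing that Lemma~\ref{salam160} is only ever applied to two elements already present in the current set, and then stripping off the accumulated partial products via Example~\ref{salam140} — to be the main (though modest) obstacle, rather than any genuine analytic difficulty, since the real work is hidden inside the lemmas.

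For the group refinement, membership of $s$ in the subgroup generated by $A$ yields a representation $s=b_1\cdots b_n$ in which each $b_i$ is either an element of $A$ or the inverse of one. Only finitely many inverses $a^{-1}$ occur, so I would let $A'$ be $A$ together with this finite set of inverses. Applying Lemma~\ref{salam170} once for each such inverse (each time the element being inverted already belongs to the current set) shows shadowing with respect to $A$ is equivalent to shadowing with respect to $A'$, and now $s$ lies in the subsemigroup generated by $A'$. The semigroup case applied to $A'$ then gives equivalence with shadowing with respect to $A'\cup\{s\}$, and a final sandwich $A\subseteq A\cup\{s\}\subseteq A'\cup\{s\}$ with Example~\ref{salam140} delivers the claim. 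If one prefers to admit the empty product, Lemma~\ref{salam150} covers the remaining case $s=e$.
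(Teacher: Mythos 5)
Your proposal is correct and is essentially the paper's own proof: the paper's argument for this theorem is literally ``Use Example~\ref{salam140}, Lemma~\ref{salam160}, Lemma~\ref{salam170} and induction,'' and your write-up supplies exactly that induction, with the sandwich $A\subseteq A\cup\{s\}\subseteq A\cup\{s',s\}$ via Example~\ref{salam140} being the right way to discard the auxiliary partial products. Your treatment of the group case by first adjoining the finitely many needed inverses via Lemma~\ref{salam170} (and covering $s=e$ via Lemma~\ref{salam150}) likewise matches the intended argument.
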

%%%%%%%%%%%%%%%%%%%%%%%%%%%%%%%%%%%%
\begin{proof}
Use 
Example~\ref{salam140}, Lemma~\ref{salam160}, Lemma~\ref{salam170} and
induction.
\end{proof}
%%%%%%%%%%%%%%%%%%%%%%%%%%%%%%%%%%%%
%%%%%%%%%%%%%%%%%%%%%%%%%%%%%%%%%%%%
\begin{corollary}\label{salam310}
Suppose $X$ is compact $A$ is a finite subset of $T$ and $s$ is a generator of $T$.
Then $(T,(X,\mathcal{K}))$ has shadowing property with respect to $A$ if and only if it
has shadowing property with respect to $\{s\}$.
\end{corollary}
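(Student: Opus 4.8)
The plan is to route the argument through the set $A\cup\{s\}$, proving the two equivalences ``shadowing with respect to $A$'' $\Leftrightarrow$ ``shadowing with respect to $A\cup\{s\}$'' and ``shadowing with respect to $\{s\}$'' $\Leftrightarrow$ ``shadowing with respect to $\{s\}\cup A$'', and then observing that $A\cup\{s\}$ and $\{s\}\cup A$ are literally the same set. Each equivalence is a direct instance of Theorem~\ref{salam180}, read in one of its two directions, so beyond invoking that theorem the work is purely a bookkeeping of which elements lie in which generated subsemigroup. The compactness of $X$ assumed in the corollary is exactly the hypothesis that Theorem~\ref{salam180} (and the Lemmas~\ref{salam160} and~\ref{salam170} behind it) require, so it is available at every step.

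For the first equivalence I would apply Theorem~\ref{salam180} to the finite set $A$ together with the element $s$. Since $s$ is a generator of $T$ and $A$ generates $T$, the element $s$ lies in the subsemigroup generated by $A$ (in the subgroup generated by $A$, when $T$ is a group), which is precisely the hypothesis under which Theorem~\ref{salam180} asserts that shadowing with respect to $A$ is equivalent to shadowing with respect to $A\cup\{s\}$; the ``moreover'' clause of that theorem is the one invoked in the group case.

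For the second equivalence I would adjoin the members of $A$ to $\{s\}$ one at a time. Writing $A=\{a_1,\dots,a_n\}$, each $a_i$ lies in $T$, which equals the subsemigroup generated by $s$, and hence lies in the subsemigroup generated by $\{s,a_1,\dots,a_{i-1}\}$; therefore Theorem~\ref{salam180} permits $a_i$ to be adjoined without altering the shadowing property. A finite induction on $i$ then yields that shadowing with respect to $\{s\}$ is equivalent to shadowing with respect to $\{s\}\cup A$. Should $e\in A$, that element is adjoined instead by Lemma~\ref{salam150}, which needs no generation hypothesis at all.

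The step I expect to carry the real content, as opposed to mere bookkeeping, is the first equivalence, because it is the one that genuinely uses that $s$ belongs to the subsemigroup generated by $A$, i.e. that $A$ generates $T$; by contrast, the hypothesis that $s$ is a generator supplies for free the reverse inclusion, namely that $A$ is contained in $T$, the subsemigroup generated by $s$, which is all that the second equivalence needs. Once both equivalences are established, combining them through the identity $A\cup\{s\}=\{s\}\cup A$ immediately gives that $(T,(X,\mathcal{K}))$ has shadowing with respect to $A$ if and only if it has shadowing with respect to $\{s\}$, which completes the proof.
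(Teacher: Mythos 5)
Your argument has a genuine gap, and it is concentrated in your ``first equivalence.'' You justify passing between $A$ and $A\cup\{s\}$ via Theorem~\ref{salam180} by asserting that ``$A$ generates $T$,'' but this is not among the hypotheses of the corollary: $A$ is only assumed to be a finite subset of $T$, and for an arbitrary finite $A$ (take $A=\{e\}$) the element $s$ need not lie in the sub(semi)group generated by $A$, so Theorem~\ref{salam180} simply does not apply. Note that the forward implication of the corollary never needs this: to go from $A$ up to $A\cup\{s\}$, plain monotonicity (Example~\ref{salam140}) suffices, with no generation hypothesis at all --- this is what the paper does, and then it descends from $A\cup\{s\}$ to $\{s\}$ exactly as in your ``second equivalence,'' adjoining the elements of $A$ one at a time to the base set $\{s\}$. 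The step of yours that cannot be repaired is the other half of the first equivalence, $A\cup\{s\}\Rightarrow A$, which is precisely where the unproved assumption does all the work. (A smaller imprecision: in your second equivalence you say each $a_i$ lies in ``$T$, which equals the subsemigroup generated by $s$''; when $s$ is a group generator, as for $T=\mathbb{Z}$, $s=1$, the subsemigroup generated by $s$ is proper, so there too you must invoke the ``moreover'' clause of Theorem~\ref{salam180}.)

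In fact the ``only if'' half of the corollary, read literally, is false without some hypothesis of the kind you smuggled in, so this part of the statement cannot be proved at all. Let $T=\mathbb{Z}_2=\{e,s\}$ act on the discrete two-point space $X=\{0,1\}$ by $sx=1-x$, and take $A=\{e\}$: every $\Delta_X$-pseudo orbit with respect to $\{s\}$ is a genuine orbit, so the system has shadowing with respect to $\{s\}$, yet it does not have shadowing with respect to $\{e\}$ by the theorem of Section~4 (here $T\neq\{e\}$ and the topology of $X$ is not trivial). For what it is worth, the paper's own proof stumbles at the same point: its closing sentence ``Use Example~\ref{salam140} to complete the proof'' only yields shadowing with respect to supersets of $\{s\}$, hence it settles the converse only when $s\in A$. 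So your instinct about where the real content lies is correct, and the hypothesis you added ($s$ belongs to the sub(semi)group generated by $A$) is exactly what would make the biconditional true; but as a proof of the corollary as stated, the appeal to ``$A$ generates $T$'' is an unjustified step, and the implication it was meant to deliver --- shadowing with respect to $\{s\}$ implies shadowing with respect to $A$ --- is not true in general.
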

%%%%%%%%%%%%%%%%%%%%%%%%%%%%%%%%%%%%
\begin{proof}
First suppose $(T,(X,\mathcal{K}))$ has shadowing property with respect to $A$, then 
$(T,(X,\mathcal{K}))$ has shadowing property with respect to $A\cup\{s\}$ by
Example~\ref{salam140}. By Theorem~\ref{salam180},
$(T,(X,\mathcal{K}))$ has shadowing property with respect to $\{s\}$. Use Example~\ref{salam140} to complete the proof. 
\end{proof}
%%%%%%%%%%%%%%%%%%%%%%%%%%%%%%%%%%%%
\section{Shadowing with respect to $\{e\}$}
\noindent In this section we prove that
uniform transformation semigroup $(T,(X,\mathcal{K}))$ has shadowing property
with respect to $\{e\}$ if and only if $T=\{e\}$ or $X$ has trivial topology (i.e. $\{X,\varnothing\}$).
%%%%%%%%%%%%%%%%%%%%%%%%%%%%%%%%%%%%
%%%%%%%%%%%%%%%%%%%%%%%%%%%%%%%%%%%%
\begin{lemma}\label{salam199}
If $(T,(X,\mathcal{K}))$ has shadowing property
with respect to $\{e\}$, then
for all opene subsets $U,V$ of $X$ and $t\in T\setminus\{e\}$, $tU\cap V\neq\varnothing$.
\end{lemma}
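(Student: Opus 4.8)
The plan is to first unpack exactly what the hypothesis gives us. By Example~\ref{salam120}, every sequence $(x_t)_{t\in T}$ is automatically a $\beta$-pseudo orbit with respect to $\{e\}$ for every $\beta\in\mathcal K$, since the defining condition in Definition~\ref{salam110} reads $(ex_t,x_{et})=(x_t,x_t)\in\beta$, which holds because $\Delta_X\subseteq\beta$. Consequently the shadowing property with respect to $\{e\}$ collapses to the much stronger-looking statement: for each $\alpha\in\mathcal K$, \emph{every} sequence $(x_t)_{t\in T}$ admits an $\alpha$-trace, i.e. a point $x\in X$ with $(tx,x_t)\in\alpha$ for all $t\in T$. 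This reformulation is the engine of the whole argument.

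Next I would argue by contradiction. Suppose there are opene sets $U,V$ and $t_0\in T\setminus\{e\}$ with $t_0U\cap V=\varnothing$. Fix $u\in U$ and $v\in V$. Using openness together with the axioms of a uniform structure (closure under intersection and inverse), I would produce a single symmetric $\alpha\in\mathcal K$ with $\alpha[u]\subseteq U$ and $\alpha[v]\subseteq V$ simultaneously; concretely, choose $\alpha_1,\alpha_2\in\mathcal K$ with $\alpha_1[u]\subseteq U$ and $\alpha_2[v]\subseteq V$, and set $\alpha=(\alpha_1\cap\alpha_2)\cap(\alpha_1\cap\alpha_2)^{-1}$.

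Now define the test sequence by $x_e=u$, $x_{t_0}=v$, and $x_t$ arbitrary otherwise; this is well defined precisely because $t_0\neq e$. Applying the reformulated hypothesis to this $\alpha$ yields an $\alpha$-trace $x$. Reading the trace condition at $t=e$ gives $(x,u)\in\alpha$, hence, by symmetry of $\alpha$ and $\alpha[u]\subseteq U$, we get $x\in U$, so $t_0x\in t_0U$. Reading it at $t=t_0$ gives $(t_0x,v)\in\alpha$, hence likewise $t_0x\in\alpha[v]\subseteq V$. Thus $t_0x\in t_0U\cap V$, contradicting disjointness, and the lemma follows.

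The step that deserves the most care — and the one I expect to be the only real subtlety — is the uniform-structure bookkeeping in the middle paragraph: one must keep the convention $\alpha[p]=\{q:(p,q)\in\alpha\}$ straight and ensure $\alpha$ is symmetric, so that the trace relations $(x,u)\in\alpha$ and $(t_0x,v)\in\alpha$ can be turned into the memberships $x\in\alpha[u]$ and $t_0x\in\alpha[v]$. It is worth emphasizing what is \emph{not} needed: I never require $t_0U$ and $V$ to be uniformly separated (they generally are not, even when disjoint); their mere set-theoretic disjointness is invoked only at the final line, which is why no compactness or Hausdorff hypothesis is required here.
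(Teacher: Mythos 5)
Your proof is correct and follows essentially the same route as the paper's: both exploit the fact (Example~\ref{salam120}) that every sequence is automatically a pseudo orbit with respect to $\{e\}$, build a test sequence equal to a point of $U$ at $e$ and a point of $V$ at the nonidentity element(s), and then use symmetric entourages to place the trace $x$ in $U$ and $t_0x$ in $V$. The only cosmetic differences are that you frame it as a contradiction (unnecessary, since your argument directly produces a point of $t_0U\cap V$) and handle a single $t_0$ rather than all $t\neq e$ at once as the paper does by setting $z_t=y$ for every $t\neq e$.
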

%%%%%%%%%%%%%%%%%%%%%%%%%%%%%%%%%%%%
\begin{proof}
Suppose $U,V$ are opene subsets of $X$ and $T\neq\{e\}$. Choose $x\in U,y\in V$. There exist $\alpha_1=\alpha_1^{-1},\alpha_2=\alpha_2^{-1}\in\mathcal{K}$ such that
$\alpha_1[x]\subseteq U,\alpha_2[y]\subseteq V$. Let $\theta=\alpha_1\cap\alpha_2$.
There exists $\psi\in\mathcal{K}$ such that every $\psi-$pseudo orbit with respect to $\{e\}$ has a $\theta-$trace.
Let $z_e=x$ and $z_t=y$ for $t\neq e$. Then $(z_t)_{t\in T}$ is a $\psi-$pseudo orbit with respect to $\{e\}$ and has a
$\theta-$trace like $z$. We have $z\in\alpha_1[x](\subseteq U)$, since $(z,x)=(ez,z_e)\in\theta\subseteq\alpha_1$.
Choose $t\in T\setminus\{e\}$, then
$(tz,y)=(tz,z_t)\in\theta\subseteq\alpha_2$
so $tz\in\alpha_2[y]\subseteq V$. Therefore $tz\in tU\cap V$, in particular $\varnothing\neq tU\cap V\subseteq TU\cap V$.
\end{proof}
%%%%%%%%%%%%%%%%%%%%%%%%%%%%%%%%%%%%
\begin{lemma}\label{salam190}
If $(T,(X,\mathcal{K}),\rho)$ has shadowing property
with respect to $\{e\}$ and $T\neq\{e\}$, then $\overline{U}=X$ for all opene subset $U$ of $X$.
\end{lemma}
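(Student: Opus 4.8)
The plan is to reduce the density statement $\overline U=X$ to the intersection property supplied by Lemma~\ref{salam199}. First I would record the elementary equivalence that, in any topological space, $\overline U=X$ holds precisely when $U$ meets every nonempty open set: a point $p$ lies in $\overline U$ iff each of its open neighbourhoods meets $U$, so $\overline U=X$ says exactly that every opene set meets $U$. Thus it suffices to fix an arbitrary opene set $V$ and produce a point of $U\cap V$; since $T\neq\{e\}$, I also fix some $s\in T\setminus\{e\}$.

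The obvious output of Lemma~\ref{salam199} is $sU\cap V\neq\varnothing$, i.e. there is $z\in U$ with $sz\in V$. This only locates a point of $V$ inside the \emph{shifted} set $sU$ rather than inside $U$ itself, and this shift is precisely the obstacle: a single application of Lemma~\ref{salam199} never returns an honest $U\cap V$. The idea that breaks the impasse is to shrink $U$ before invoking the lemma a second time. Using continuity of the action, the set $W:=\{x\in X:sx\in V\}$ is open, and the witness $z$ shows $z\in U\cap W$; hence $N:=U\cap W$ is a nonempty open set with $N\subseteq U$ and $sN\subseteq V$. So $N$ sits inside $U$, yet its $s$-translate is buried in $V$.

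Now I would apply Lemma~\ref{salam199} once more, to the opene set $N$ against itself with the same $s\neq e$, obtaining $sN\cap N\neq\varnothing$. Any $p$ in this intersection satisfies $p\in N\subseteq U$, while writing $p=sn$ with $n\in N$ gives $p\in sN\subseteq V$; hence $p\in U\cap V$. As $V$ was arbitrary, $U$ meets every nonempty open set and $\overline U=X$. The only points to check are that every set fed into Lemma~\ref{salam199} is genuinely opene: $N$ is open as the intersection of $U$ with the preimage of $V$ under the continuous map $x\mapsto sx$, and it is nonempty exactly because the first application of Lemma~\ref{salam199} (to the pair $U,V$) produced $z\in N$. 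No compactness or invertibility is used, so the argument is valid for semigroups; the whole weight of the proof rests on the two invocations of Lemma~\ref{salam199} together with the trick of passing to $N$, which converts the shift built into Lemma~\ref{salam199} into a genuine overlap of $U$ and $V$.
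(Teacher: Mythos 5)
Your proof is correct: the reduction of $\overline U=X$ to ``$U$ meets every opene set'' is standard, $N=U\cap(\rho^s)^{-1}(V)$ is open by continuity of $\rho^s$ and nonempty by the first application of Lemma~\ref{salam199}, and the second application, to the pair $(N,N)$, indeed yields a point $p\in sN\cap N\subseteq V\cap U$. The paper uses exactly the same two ingredients --- Lemma~\ref{salam199} and continuity of the action --- but organizes them differently: it first proves the intermediate claim $TU\subseteq\overline U$ (for $x\in U$ and $t\neq e$, any neighbourhood $W$ of $tx$ pulls back to a neighbourhood $V'$ of $x$ with $tV'\subseteq W$, and Lemma~\ref{salam199} applied to $V'$ and $U$ gives $W\cap U\neq\varnothing$), and then argues by contradiction, applying Lemma~\ref{salam199} to $U$ and the opene set $V:=X\setminus\overline U$ to obtain $tU\cap V\neq\varnothing$, which clashes with $tU\subseteq\overline U$. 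Your route is direct rather than by contradiction: the shrinking step replaces the invariance claim, and the self-intersection $sN\cap N\neq\varnothing$ converts the shift that Lemma~\ref{salam199} builds in into an honest point of $U\cap V$. What your version buys is brevity and a constructive witness in $U\cap V$; what the paper's version buys is the intermediate fact $TU\subseteq\overline U$, which records the near-invariance of closures of open sets under the action and makes the final step a one-line contradiction. Both arguments use only continuity of each $\rho^t$, so neither needs compactness or invertibility.
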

%%%%%%%%%%%%%%%%%%%%%%%%%%%%%%%%%%%%
\begin{proof}
Suppose $U$ is an open subset of $X$, for
each $x\in U$, $t\in T\setminus\{e\}$ and open neighbourhood $W$ of $tx$ there exists
open subset $V$ of $x$ such that $tV\subseteq W$ (since $\rho^t:X\to X$ is continuous).
By Lemma~\ref{salam199}, $(W\cap U\supseteq)tV\cap U\neq\varnothing$. Since all
open neighbourhood of $tx$ like $W$ has nonempty intersection with $U$, we have
$tx\in\overline{U}$. Therefore $(T\setminus\{e\})x\subseteq \overline{U}$. Hence
$Tx=(T\setminus\{e\})x\cup\{ex\}=(T\setminus\{e\})x\cup\{x\}\subseteq \overline{U} \cup U=\overline{U}$. However $x\in U$ is arbitrary. thus
$TU=\bigcup\{Tz:z\in U\}\subseteq \overline{U}$.
\\
Choose $t\in T\setminus\{e\}$.
If $U$ is an opene subset of $X$ such that $\overline{U}\neq X$, thus
$V=X\setminus\overline{U}$ is an opene subset of $X$ too. By
Lemma~\ref{salam199}~(2), $tU\cap V\neq\varnothing$. By the first part of proof,
$tU\subseteq TU\subseteq \overline{U}$, hence $\overline{U}\cap V\neq\varnothing$
which is a contradiction and leads to the desired result.
\end{proof}
%%%%%%%%%%%%%%%%%%%%%%%%%%%%%%%%%%%%
\begin{remark}\label{salam195}
Let's recall that a topological space is completely regular if for each closed subset $C$
of $X$ and $x\in X\setminus C$ there exists continuous map $f:X\to[0,1]$ such
that $f(x)=1$ and $f(C)=\{0\}$. Moreover, a topological space $X$ is regular if
for each closed subset $C$
of $X$ and $x\in X\setminus C$ there exist disjoint open sets $U,V$ such that
$C\subseteq U, x\in V$. So each completely regular space is regular.
On the other hand each uniform topological space is completely
regular~\cite[Theorem 38.2]{will}.
\end{remark}
%%%%%%%%%%%%%%%%%%%%%%%%%%%%%%%%%%%%
\begin{theorem}
Uniform transformation semigroup $(T,(X,\mathcal{K}))$ has shadowing property
with respect to $\{e\}$ if and only if either $T=\{e\}$ or $X$ is trivial topological space.
\end{theorem}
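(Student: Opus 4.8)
The plan is to prove the two implications separately, treating the reverse direction as essentially immediate and concentrating the real effort on the forward direction, where the preceding lemmas already supply most of the structure.

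For the reverse implication I would argue directly. Suppose first that $T=\{e\}$. Given $\alpha\in\mathcal K$ and any sequence $(x_t)_{t\in T}$ (which is just the single point $x_e$), the point $x:=x_e$ satisfies $(ex,x_e)=(x_e,x_e)\in\Delta_X\subseteq\alpha$, so it is an $\alpha$-trace; hence shadowing with respect to $\{e\}$ holds for any choice of $\beta$. If instead $X$ carries the trivial topology, then its only compatible uniform structure is $\mathcal K=\{X\times X\}$ (any compatible $\mathcal K$ must have $\alpha[x]=X$ for every $x$, forcing $\alpha=X\times X$), so for the sole available entourage $\alpha=X\times X$ every point is an $\alpha$-trace of every sequence, and shadowing again holds trivially. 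This settles $(\Leftarrow)$.

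For the forward implication I would assume that $(T,(X,\mathcal K))$ has shadowing with respect to $\{e\}$ and that $T\neq\{e\}$, and show $X$ is trivial. The key input is Lemma~\ref{salam190}, which already yields $\overline U=X$ for every opene (nonempty open) $U\subseteq X$; that is, every nonempty open set is dense. The remaining job is to upgrade this density statement to the stronger assertion that no proper nonempty open set exists at all, and this is exactly where separation enters: by Remark~\ref{salam195} the uniform topology on $X$ is completely regular, hence regular.

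The heart of the argument, and the only step I expect to be nonroutine, is the purely topological claim that \emph{a regular space in which every nonempty open set is dense is indiscrete}. I would prove it by contradiction: suppose $W$ is open with $\varnothing\neq W\neq X$, pick $p\in W$, and set $C:=X\setminus W$, a nonempty closed set with $p\notin C$. Regularity furnishes disjoint open sets $U\ni p$ and $V\supseteq C$, both nonempty (the latter because $C\neq\varnothing$). Since $U\cap V=\varnothing$ and $V$ is open, $U\subseteq X\setminus V$, which is closed, so $\overline U\subseteq X\setminus V$ and thus $\overline U\cap V=\varnothing$; as $V\neq\varnothing$ this forces $\overline U\neq X$, contradicting the density $\overline U=X$ from Lemma~\ref{salam190}. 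Hence no such $W$ exists and $X$ has only the trivial topology. The one subtlety to flag is that the implication really does require regularity rather than a $T_1$ or Hausdorff hypothesis (which we do not assume): it is regularity, extracted from the uniform structure via Remark~\ref{salam195}, that converts ``every nonempty open set is dense'' into ``indiscrete.'' With that in hand the contradiction above is immediate and the theorem follows.
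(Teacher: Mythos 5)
Your proposal is correct and follows essentially the same route as the paper's proof: for the substantive forward direction you combine Lemma~\ref{salam190} (every nonempty open set is dense when $T\neq\{e\}$) with regularity of the uniform topology via Remark~\ref{salam195}, exactly as the paper does, merely phrased as a contradiction instead of the paper's direct chain $X=\overline{V}\subseteq X\setminus W\subseteq U$. The only difference is that you also write out the trivial converse direction (including the observation that a trivial topology forces $\mathcal{K}=\{X\times X\}$) and the case split on $T=\{e\}$, which the paper leaves implicit.
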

%%%%%%%%%%%%%%%%%%%%%%%%%%%%%%%%%%%%
\begin{proof}
Suppose $(T,(X,\mathcal{K}))$ has shadowing property
with respect to $\{e\}$ and $U$ is an opene subset of $X$. Choose $x\in U$
by Remark~\ref{salam195} there exist open sets like $V,W$ such that
$x\in V,X\setminus U\subseteq W$ and $V\cap W=\varnothing$, so by Lemma~\ref{salam190},
$X=\overline{V}\subseteq X\setminus W\subseteq U$ which shows $U=X$. Thus
$X$ has trivial topology.
\end{proof}
%%%%%%%%%%%%%%%%%%%%%%%%%%%%%%%%%%%%
\section{$\mathcal{I}-$shadowing property}
%%%%%%%%%%%%%%%%%%%%%%%%%%%%%%%%%%%%
\begin{definition}
We say the uniform transformation semigroup $(T,(X,\mathcal{K}))$ has shadowing property modulo ideal $\mathcal I$ (or briefly
${\mathcal I}-$shadowing property)
if there exists nonempty $A\in\mathcal{I}$
such that $(T,(X,\mathcal{K}))$ has shadowing property with respect to $A$
(see Corollary~\ref{salam300}).
\\
We say the uniform transformation semigroup $(T,(X,\mathcal{K}))$ has shadowing property
or pseudo orbit tracing property (POTP) if it has $\mathcal{P}_{fin}(T)-$shadowing property.
\end{definition}
\noindent Let's mention that we call a transformation semigroup $(T,X)$ effective if
for each distinct $s,t\in T$ there exists $x\in X$ such that $tx\neq sx$.
%%%%%%%%%%%%%%%%%%%%%%%%%%%%%%%%%%%%
\begin{example}\label{salam130}
In finite uniform effective transformation semigroup $(T,(X,\mathcal{K}))$, suppose $V=\bigcap\mathcal{K}$, then
$\mathcal{K}=\{D\subseteq X\times X:V\subseteq D\}$. So $T$ is finite.
Suppose $(x_t)_{t\in T}$ is a $V-$pseudo orbit with respect to $T$, then for all $a\in T$ we have
$(ax_e,x_a)\in V$, in particular $x_e$ is an $\alpha-$trace of $(x_t)_{t\in T}$ for all $t\in T$.
Therefore $(T,(X,\mathcal{K}))$ has shadowing property.
\end{example}
%%%%%%%%%%%%%%%%%%%%%%%%%%%%%%%%%%%%
%%%%%%%%%%%%%%%%%%%%%%%%%%%%%%%%%%%%
%%%%%%%%%%%%%%%%%%%%%%%%%%%%%%%%%%%%
\subsection{Compatibility with some of the other generalizations of shadowing in dynamical systems}\label{subsection}
Let's bring here 5 witnesses:
\\
%%%%%%%%%%%%%%%%%%%%%%%%%%%%%%%%%%%%
$\bullet$ in compact metric space $(X,d)$, homeomorphismm $f:X\to X$ has POTP
in the sense of~\cite[Definition 3]{walters} if and only if $(\mathbb{Z},X,\varrho_f)$ has POTP
in the sense of Definition~\ref{salam110},
\\
$\bullet$ in metric space $(X,d)$, homeomorphismm $f:X\to X$ has POTP
in the sense of \cite[Definition 1.3]{pilyugin} if and only if $(\mathbb{Z},(X,\mathcal{K}_d),\varrho_f)$ has POTP
according to Definition~\ref{salam110},
\\
$\bullet$ in metric space $(X,d)$, continuous (non--bijective) map $f:X\to X$ has POTP
in the sense of \cite[Section 2.3]{aoki} if and only if $(\mathbb{N}\cup\{0\},(X,\mathcal{K}_d),\rho_f)$ has POTP
according to Definition~\ref{salam110},
\\
$\bullet$ in compact uniform space $X$, homeomorphism $f:X\to X$ has POTP
in the sense of \cite[Definition 2.2]{yan} if and only if $(\mathbb{Z},X,\varrho_f)$ has POTP
according to Definition~\ref{salam110},
%\\
%$\bullet$ in uniform space $(X,\mathcal{K})$, uniformly continuous $f:(X,\mathcal{K})\to (X,\mathcal{K})$ has POTP
%in the sense of~\cite{ahmadi} \footnote{authors of \cite{ahmadi} called this property topological shadowing, however their definition is different with topological shadowing~\cite{top}, they indicate this note for just metric spaces, but it is valid for their own definition too} if and only if $(\mathbb{Z},(X,\mathcal{K}),\rho_f)$ has POTP
%according to Definition~\ref{salam110}.
\\
\subsection{Compatibility with some of the other generalizations of shadowing in transformation groups} In transformation group
$(T,X,\rho)$:
\\
$\bullet$ suppose $T$ is finitely generated and $X$ is a compact metric space. So $(T,X)$ has POTP in the sense of~\cite[Definition 2.6]{lee} if and only if $(T,X)$ has POTP
according to Definition~\ref{salam110},
\\
$\bullet$ suppose $T$ is finitely generated, $(X,d)$ is a metric space and for each $t\in T$ the map
$\rho^t:(X,\mathcal{K})\to(X,\mathcal{K})$ is equicontinuous. $(T,(X,d))$ has POTP in the sense of~\cite[Definition 2.2]{osipov} if and only if $(T,(X,\mathcal{K}_d))$ has POTP
according to Definition~\ref{salam110}.
\\
$\bullet$ suppose $T$ is finitely generated and $(X,\mathcal{K})$ is a uniform space.
$(T,(X,\mathcal{K}))$ has POTP (shadowing property) in the sense
of~\cite[Definition 2.4]{stab} if and only if $(T,(X,\mathcal{K}))$ has POTP
according to Definition~\ref{salam110}.
%%%%%%%%%%%%%%%%%%%%%%%%%%%%%%%%%%%%
\subsection{A counterexample}
By the following example shadowing and $\mathcal{I}-$shadowing are not equivalent.
%%%%%%%%%%%%%%%%%%%%%%%%%%%%%%%%%%%%
\begin{counterexample}
Let $X:=\left\{\dfrac{\pm1}n:n\geq1\right\}\cup\{0\}$ with induced metric $d$
of Euclidean line $\mathbb R$
and define $f:X\to X$ with:
\[f(x)=\left\{\begin{array}{lc}
\frac{1}{n-2} & x=1/n \: and \: n\in2\mathbb{Z}+1\:, \\
-x & otherwise\:.
\end{array}\right.\]
Also consider ideal $\mathcal{I}:=\mathcal{P}(2\mathbb{Z}+1)$ on $\mathbb{Z}$,
then transformation group $(\mathbb{Z},X,\varrho_f)$,
does not have shadowing property, although it has $\mathcal{I}-$shadowing property.
\end{counterexample}
%%%%%%%%%%%%%%%%%%%%%%%%%%%%%%%%%%%%
\begin{proof}
For $r>0$ let $\alpha_r:=\{(x,y)\in X\times X:|x-y|<r\}$.
By Corollary~\ref{salam310}, $(\mathbb{Z},X,\varrho_f)$ 
has shadowing property if and only if it has shadowing property 
with respect to $\{1\}$. Suppose $(\mathbb{Z},X,\varrho_f)$ 
has shadowing property with respect to $\{1\}$. Thus there exists $\delta>0$
such that each $\alpha_\delta-$pseudo orbit with respect to $\{1\}$ has an $\alpha_{1/8}-$trace.
Choose $N\geq2$ with $\frac{1}{2N+1}<\delta/2$, and let
\[x_n:=\left\{\begin{array}{lc}
f^{n}(\frac{-1}{2N+1}) & n\leq -1\:, \\ f^{n}(\frac{1}{2N+1}) & n\geq0\:.
\end{array}\right.\]
i.e., $(x_n)_{n\in\mathbb{Z}}=
(\cdots,f^{-2}(\frac{-1}{2N+1}),f^{-1}(\frac{-1}{2N+1}),\frac{1}{2N+1},
f(\frac{1}{2N+1}),f^2(\frac{1}{2N+1}),\cdots)$, then $(x_n)_{n\in\mathbb{Z}}$ is an
$\alpha_\delta-$pseudo orbit with respect to $\{1\}$, hence it has an $\alpha_{1/8}-$trace like $z$.
For each $n\in\mathbb{Z}$, $|f^n(z)-x_n|<1/8$, thus
$|1-f^N(z)|=|x_N-f^N(z)|<1/8$ thus $f^N(z)=1$
and $z=\frac{1}{2N+1}$, also $|-1-f^{-N}(z)|=|x_{-N}-f^{-N}(z)|<1/8$ thus $f^{-N}(z)=-1$
and $z=\frac{-1}{2N+1}$ which is a contradiction, thus 
$(\mathbb{Z},X,\varrho_f)$ does not have shadowing property.
\\
In order to show $(\mathbb{Z},X,\varrho_f)$ 
has $\mathcal{I}-$shadowing property, we prove $(\mathbb{Z},X,\varrho_f)$ 
has shadowing property with respect to $2\mathbb{Z}+1$. For $\varepsilon>0$ 
choose $N\geq3$ such that $1/N<\varepsilon$ let $\delta:=\frac{1}{2N+2}-\frac{1}{2N+4}(\leq1/8)$.
Suppose $(y_n)_{n\in\mathbb{Z}}$ is an $\alpha_\delta-$pseudo orbit 
with respect to $2\mathbb{Z}+1$. We show $(y_n)_{n\in\mathbb{Z}}$ has an $\alpha_\varepsilon-$trace
via the following 3 claims.
\\
{\bf Claim I.} If 
$\{y_n:n\in\mathbb{Z}\}\cap\left\{\dfrac{1}{2n+1}:n\in\mathbb{Z}\right\}\neq\varnothing$, 
then $(y_n)_{n\in\mathbb{Z}}$ is an orbit, i.e. 
there exists $w\in X$ such that $y_n=f^n(w)$ for all 
$n\in\mathbb{Z}$, in particular $w$ is an $\alpha_r-$trace 
of $(y_n)_{n\in\mathbb{Z}}$ for each $r>0$.
\\
{\it Proof of Claim I.} If there exists $p,q\in\mathbb{Z}$ such that $y_p=\frac{1}{2q+1}$, 
then 
\\
$q\in2\mathbb{Z}+1\vee q+1\in2\mathbb{Z}+1  $
\begin{eqnarray*}
& \Rightarrow & |f^q(y_p)-y_{p+q}|<1/8\vee|f^{q+1}(y_p)-y_{p+q+1}|<1/8 \\
& \Rightarrow & |1-y_{p+q}|<1/8\vee|-1-y_{p+q+1}|<1/8 \\
& \Rightarrow & y_{p+q}=1\vee y_{p+q+1}=-1 \\
& \Rightarrow & y_{p+q}=1\vee |1-y_{p+q}|=|f^{-1}(y_{p+q+1})-y_{p+q+1-1}|<1/8 \\
& \Rightarrow & y_{p+q}=1
\end{eqnarray*}
Thus for each $n\in\mathbb{Z}$ we have
\[|f^{2n+1}(y_{p+q-(2n+1)})-1|=|f^{2n+1}(y_{p+q-(2n+1)})-y_{p+q}|<1/8\:,\]
therefore $f^{2n+1}(y_{p+q-(2n+1)})=1$ and $y_{p+q-(2n+1)}=f^{-2n-1}(1)$.
In particular $y_{p+q+1}=f(1)=-1$. 
Also 
\[|f^{2n+1}(y_{p+q-(2n)})+1|=|f^{2n+1}(y_{p+q-(2n)})-y_{p+q+1}|<1/8\:,\]
therefore $f^{2n+1}(y_{p+q-(2n)})=-1$ and $y_{p+q-(2n)}=f^{-2n-1}(-1)=f^{-2n}(1)$. Hence:
\[\forall j\in\mathbb{Z}\: (y_j=f^j(f^{-p-q}(1))=f^j(\frac{1}{1+2p+2q}))\]
and $\frac{1}{1+2p+2q}$ is an $\alpha_r-$trace of $(y_n)_{n\in\mathbb{Z}}$ for each $r>0$.
\\
{\bf Claim II.} If 
$\{y_n:n\in\mathbb{Z}\}\subseteq\left\{\dfrac{\pm1}{2n}:n\geq N\right\}\cup\{0\}$, 
then $0$ is an $\alpha_\varepsilon-$trace of  $(y_n)_{n\in\mathbb{Z}}$.
\\
{\it Proof of Claim II.}  If $\{y_n:n\in\mathbb{Z}\}\subseteq\left\{\dfrac{\pm1}{2n}:n\geq N\right\}\cup\{0\}$,
then for each $n\in\mathbb{Z}$, $|f^n(0)-y_n|=|y_n|\leq \frac{1}{2N}<\varepsilon$.
\\
{\bf Claim III.} If 
$\{y_n:n\in\mathbb{Z}\}\subseteq\left\{\dfrac{\pm1}{2n}:n\geq 1\right\}\cup\{0\}$, and
$\{y_n:n\in\mathbb{Z}\}\cap\left\{\dfrac{\pm1}{2n}:1\leq n<N\right\}\neq\varnothing$
then  $(y_n)_{n\in\mathbb{Z}}$ is an orbit, i.e. 
there exists $w\in X$ such that $y_n=f^n(w)$ for all 
$n\in\mathbb{Z}$, in particular $w$ is an $\alpha_r-$trace 
of $(y_n)_{n\in\mathbb{Z}}$ for each $r>0$.
\\
{\it Proof of Claim III.}  
Suppose $\{y_n:n\in\mathbb{Z}\}\subseteq\left\{\dfrac{\pm1}{2n}:n\geq 1\right\}\cup\{0\}$
and there exists $p,q\in\mathbb{Z}$ with $1\leq |q|<N$ and $y_p=\frac{1}{2q}$.
Then $\{x\in X: \exists n\in\mathbb{Z}\: |f^m(y_p)-x|<\delta\}=\{\frac{\pm1}{2q}\}$. 
On the other hand for each $n\in\mathbb{Z}$, 
$\{|\frac{\pm1}{2q}-y_{p+2n+1}|\}\ni|f^{2n+1}(y_p)-y_{p+2n+1}|<\delta$
thus $y_{p+2n+1}=f^{2n+1}(y_p)=f(y_p)$. In particular $y_{p+1}=f^(y_p)=-y_p=\frac{-1}{2q}$. 
Similarly $y_{p+1+2n+1}=f^{2n+1}(y_{p+1})=f^{2n+2}(y_p)=y_p$ for each $n\in\mathbb{Z}$.
Hence $y_j=f^j(y_p)$ for each $j\in\mathbb{Z}$.
\\
Using the above Claims leads to the desired result.
\end{proof}
%%%%%%%%%%%%%%%%%%%%%%%%%%%%%%%%%%%%
\section{$\mathcal{I}-$expansiveness}
%%%%%%%%%%%%%%%%%%%%%%%%%%%%%%%%%%%%
\begin{definition}\label{salam210}
We say $(T,(X,\mathcal{K}))$ is expansive 
modulo ideal $\mathcal I$
(or briefly,  ${\mathcal I}-$expansive)
if there exists $\alpha\in\mathcal{K}$
(${\mathcal I}-$expansive index)
such that for all distinct $x,y\in X$ and $E\in\mathcal{I}$
there exists $t\in T\setminus E$ such that $(tx,ty)\notin\alpha$.
\end{definition}
%%%%%%%%%%%%%%%%%%%%%%%%%%%%%%%%%%%%
\begin{note}\label{salam215}
As a matter of fact $(T,(X,\mathcal{K}))$ is ${\mathcal I}-$expansive
with ${\mathcal I}-$expansive index $\psi$ if $\{t\in T:(tx,ty)\notin \psi\}\notin\mathcal{I}$ for all distinct $x,y\in X$.
Moreover if $\lambda\subseteq\psi$ and $\lambda\in\mathcal{K}$, then $\lambda$
is an ${\mathcal I}-$expansive index too.
\\
We say $(T,(X,\mathcal{K}))$ is expansive if it is $\{\varnothing\}$-expansive
(see e.g.,~\cite{khodam}).
\end{note}
%%%%%%%%%%%%%%%%%%%%%%%%%%%%%%%%%%%%
\noindent If $\mathcal{I}$ and $\mathcal{J}$ are two ideals on $T$ with $\mathcal{J}\subseteq \mathcal{I}$,
it is evident that $\mathcal{I}-$expansivity of $(T,X)$ leads to
$\mathcal{J}-$expansivity of $(T,X)$. In particular, $\mathcal{I}-$expansivity of $(T,X)$
leads to expansivity of $(T,X)$. However, the following counterexample shows that
$\mathcal{I}-$expansivity and expansivity are not equivalent.
%%%%%%%%%%%%%%%%%%%%%%%%%%%%%%%%%%%%
\begin{counterexample}\label{salam217}
Let $X:=(0,+\infty)$ with induced metric $d$ from Euclidean line $\mathbb R$,
and uniform structure $\mathcal{K}:=\mathcal{K}_d$,
for each $n\in\mathbb{Z}$ consider $f_n:X\to X$ with $f_n(x)=x^n(x\in X)$,
also let $\mathcal{I}:=\mathcal{P}(\{f_{-n}:n\geq1\})$ and $T=\{f_n:n\in\mathbb{Z}\}$. Then
the transformation semigroup $(T,X)$ is expansive and it is not $\mathcal{I}-$expansive.
\end{counterexample}
%%%%%%%%%%%%%%%%%%%%%%%%%%%%%%%%%%%%
\begin{proof}
Note that $\psi:=\{(x,y)\in X\times X:|x-y|<1/2\}\in\mathcal{K}$.
Consider distinct $x,y\in X$ we have the following cases:
\begin{itemize}
\item[i.] $x,y\geq1$. We may suppose $x>y\geq1$,
	there exists $n\geq1$ such that $(x/y)^n>2$, thus
	$d(f^n(x),f^n(y))=|f^n(x)-f^n(y)|=x^n-y^n>y^n\geq1$,
	so $(f_n(x),f_n(y))\notin\psi$.
\item[ii.] $x,y\leq1$. In this case $x^{-1},y^{1}\geq1$, so using (i)
	there exists $m\geq1$ such that
	$(f_{-m}(x),f_{-m}(y))=(f_m(x^{-1}),f_m(y^{-1}))\notin\psi$.
\item[iii.] $\min(x,y)<1<\max(x,y)$. We may suppose $x<1<y$,
	there exists $n\geq1$ such that $y^n>2>1>x\geq x^n$, thus
	$d(f^n(x),f^n(y))=|f^n(x)-f^n(y)|=y^n-x^n>1$,
	so $(f_n(x),f_n(y))\notin\psi$.
\end{itemize}
Using (i), (ii), (iii), $(T,X)$ is expansive with expansive index $\psi$.
\\
If $(T,X)$ is $\mathcal{I}-$expansive with $\mathcal{I}-$expansive $\mu$ index
then there exists $\delta\in(0,1)$ such that $\theta:==\{(x,y)\in X\times X:|x-y|<\delta\}\subseteq\mu$.
there exists $f_m\in T\setminus\{f_{-n}:n\geq1\}$ such that $(f_m(\delta/2),f_m(\delta/4))\notin\mu$ thus
$m\geq1$ and $(f_m(\delta/2),f_m(\delta/4))\notin\theta$. Therefore
$\delta>d(f_m(\delta/2),f_m(\delta/4))=(\delta/2)^m-(\delta/4)^m$ which is a contradiction,
hence $(T,X)$ is not $\mathcal{I}-$expansive.
\end{proof}
%%%%%%%%%%%%%%%%%%%%%%%%%%%%%%%%%%%%
\begin{lemma}\label{salam230}
Suppose  $(T,(X,\mathcal{K}))$ is
${\mathcal I}-$expansive with ${\mathcal I}-$expansive index $\psi$, also
$\beta$ is an index with $\beta^{-1}\circ\beta\subseteq\psi$, then each sequence $(x_t)_{t\in T}$ in $X$,
has at most one $\beta-$trace.
\end{lemma}
%%%%%%%%%%%%%%%%%%%%%%%%%%%%%%%%%%%%
\begin{proof}
Suppose $x,y\in X$ are $\beta-$traces of $(x_t)_{t\in T}$, then
\[\forall t\in T\:((tx,x_t)\in\beta\wedge(x_t,ty)\in\beta^{-1})\:, \]
so
$(tx,ty)\in \beta^{-1}\circ\beta\subseteq\psi$ for each $t\in T$,
which leads to $x=y$ since $\psi$ is an ${\mathcal I}-$expansive index of transformation
semigroup $(T,(X,\mathcal{K}))$.
\end{proof}
%%%%%%%%%%%%%%%%%%%%%%%%%%%%%%%%%%%%
\begin{corollary}\label{salam240}
For nonempty subset $A$ of $T$,
suppose $(T,(X,\mathcal{K}))$
is ${\mathcal I}-$expansive with
${\mathcal I}-$expansive index $\psi$ and has shadowing property with respect to $A$, then there exist
$\theta,\lambda\in\mathcal{K}$ such that
$\lambda^{-1}\circ\lambda\subseteq\psi$ and each
$\theta-$pseudo orbit with respect to $A$
has a $\lambda-$trace.
By Lemma~\ref{salam230}
each $\theta-$pseudo orbit with respect to $A$ has a unique $\lambda-$trace.
\end{corollary}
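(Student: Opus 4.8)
The plan is to produce $\lambda$ first, purely from the uniform--structure axioms, and only afterwards feed it into the shadowing hypothesis. Since $\psi$ is an $\mathcal{I}$-expansive index, the penultimate axiom of a uniform structure gives $\mu\in\mathcal{K}$ with $\mu\circ\mu\subseteq\psi$. Setting $\lambda:=\mu\cap\mu^{-1}$, the intersection axiom guarantees $\lambda\in\mathcal{K}$, and $\lambda$ is symmetric, since $\lambda^{-1}=\mu^{-1}\cap\mu=\lambda$. Because $\lambda\subseteq\mu$, I then obtain $\lambda^{-1}\circ\lambda=\lambda\circ\lambda\subseteq\mu\circ\mu\subseteq\psi$, which is precisely the first required condition.

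Next I would invoke the shadowing property with respect to $A$ directly. Applying Definition~\ref{salam110} to the index $\alpha:=\lambda\in\mathcal{K}$ yields some $\beta\in\mathcal{K}$ such that every $\beta$-pseudo orbit with respect to $A$ has a $\lambda$-trace. Taking $\theta:=\beta$ then furnishes the pair $\theta,\lambda\in\mathcal{K}$ enjoying both $\lambda^{-1}\circ\lambda\subseteq\psi$ and the property that each $\theta$-pseudo orbit with respect to $A$ admits a $\lambda$-trace, which is exactly the asserted existence statement.

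For the uniqueness clause I would simply appeal to Lemma~\ref{salam230}: because $\lambda^{-1}\circ\lambda\subseteq\psi$ and $\psi$ is an $\mathcal{I}$-expansive index of $(T,(X,\mathcal{K}))$, every sequence $(x_t)_{t\in T}$ in $X$ has at most one $\lambda$-trace. Combined with the existence of a $\lambda$-trace established in the previous step, it follows that each $\theta$-pseudo orbit with respect to $A$ has a unique $\lambda$-trace.

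There is no genuine obstacle here; the statement is essentially a bookkeeping combination of the two hypotheses. The only point that requires a moment's care is arranging the single index $\lambda$ to serve two purposes simultaneously: it must be symmetric and contracting enough that $\lambda^{-1}\circ\lambda\subseteq\psi$, so that Lemma~\ref{salam230} applies, while at the same time being the target accuracy handed to the shadowing definition. Everything else is a routine application of the uniform--structure axioms.
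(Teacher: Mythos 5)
Your proposal is correct and follows exactly the argument the paper intends: the corollary is stated without a separate proof precisely because it is the direct combination of the uniform-structure axioms (your symmetrization $\lambda:=\mu\cap\mu^{-1}$ with $\mu\circ\mu\subseteq\psi$ being the standard device to achieve $\lambda^{-1}\circ\lambda\subseteq\psi$), Definition~\ref{salam110} applied with target accuracy $\lambda$ to produce $\theta$, and Lemma~\ref{salam230} for uniqueness. Nothing is missing.
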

%%%%%%%%%%%%%%%%%%%%%%%%%%%%%%%%%%%%
\begin{lemma}\label{salam250}
Suppose $X$ is compact and $(T,(X,\mathcal{K}))$ is ${\mathcal I}-$expansive with closed ${\mathcal I}-$expansive index $\psi$, also
$\beta$ is an open index, then for all $E\in\mathcal{I}$
there exists nonempty finite subset $F$ of $T\setminus E$ such that
\[\forall x,y\in X\SP (F(x,y)=\{(tx,ty):t\in F\}\subseteq\psi\Rightarrow(x,y)\in\beta)\:.\]
\end{lemma}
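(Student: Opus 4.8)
The plan is to obtain $F$ by a compactness argument on the product $X\times X$, converting the pointwise separation guaranteed by $\mathcal I$-expansiveness into a single finite, uniform separating set. The two structural hypotheses will be used in opposite directions: $\beta$ being open makes its complement compact, while $\psi$ being closed makes the ``separation sets'' open, so that an open cover of a compact set appears.

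Fix $E\in\mathcal I$ and put $K:=(X\times X)\setminus\beta$. Since $\beta$ is open and $X\times X$ is compact, $K$ is compact. For each $t\in T\setminus E$ I would set
\[U_t:=\{(x,y)\in X\times X:(tx,ty)\notin\psi\}.\]
Because the map $x\mapsto tx$ is continuous, so is $(x,y)\mapsto(tx,ty)$; as $\psi$ is closed, its preimage under this map is closed, and hence each $U_t$ is open. The key point is that $\{U_t:t\in T\setminus E\}$ covers $K$. Indeed, if $(x,y)\in K$ then $(x,y)\notin\beta$, and since $\Delta_X\subseteq\beta$ this forces $x\neq y$; applying $\mathcal I$-expansiveness (Definition~\ref{salam210}) to the distinct pair $x,y$ and to $E$ yields some $t\in T\setminus E$ with $(tx,ty)\notin\psi$, that is, $(x,y)\in U_t$. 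Compactness of $K$ then gives a finite $F\subseteq T\setminus E$ with $K\subseteq\bigcup_{t\in F}U_t$; enlarging $F$ by a single element of $T\setminus E$ if necessary, I may take $F$ nonempty (when $K\neq\varnothing$ the cover already forces $F$ nonempty).

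It then remains to verify the displayed implication. Suppose $(x,y)\in X\times X$ satisfies $\{(tx,ty):t\in F\}\subseteq\psi$, i.e.\ $(tx,ty)\in\psi$ for every $t\in F$. Then $(x,y)\notin U_t$ for all $t\in F$, so $(x,y)\notin\bigcup_{t\in F}U_t$; since this union contains $K$, I conclude $(x,y)\notin K$, i.e.\ $(x,y)\in\beta$, as required. The argument is short once arranged, and I do not expect a serious obstacle; the only delicate points are recording that the closedness of $\psi$ (not merely $\psi\in\mathcal K$) is exactly what makes each $U_t$ open, and that $\beta\supseteq\Delta_X$ is what allows expansiveness to be invoked on every pair of $K$, with the nonemptiness of $F$ being the trivial bookkeeping step handled above.
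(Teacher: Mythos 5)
Your proof is correct, and it takes a genuinely different (and more direct) route than the paper's. The paper argues by contradiction via net-compactness: assuming the conclusion fails for some $E\in\mathcal{I}$, it picks for each nonempty finite $F\subseteq T\setminus E$ a pair $(x_F,y_F)$ with $F(x_F,y_F)\subseteq\psi$ and $(x_F,y_F)\notin\beta$, extracts a subnet of the net $\{(x_F,y_F)\}$ (indexed by $(\mathcal{P}_{fin}(T\setminus E)\setminus\{\varnothing\},\subseteq)$) converging to some $(z,w)$, and then plays the closedness of $\psi$ (giving $(tz,tw)\in\psi$ for all $t\in T\setminus E$, hence $z=w$ by expansiveness) against the closedness of $(X\times X)\setminus\beta$ (giving $(z,w)\notin\beta\supseteq\Delta_X$), a contradiction. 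You instead use cover-compactness directly: the sets $U_t=\{(x,y):(tx,ty)\notin\psi\}$, $t\in T\setminus E$, are open because $\psi$ is closed and the action is continuous; they cover the compact set $K=(X\times X)\setminus\beta$ by expansiveness together with $\Delta_X\subseteq\beta$; and a finite subcover is exactly the desired $F$. The two arguments are dual manifestations of the same compactness and use the hypotheses in exactly corresponding ways (closed $\psi$ makes the separation sets open, respectively makes $\psi$ stable under limits; open $\beta$ makes $K$ compact, respectively makes its complement closed under limits). What your version buys is the elimination of subnets and of the contradiction scaffolding: $F$ arises constructively as the index set of a finite subcover, and the concluding implication is a one-line contrapositive. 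One shared blemish, inherited from the statement rather than a defect of your argument relative to the paper's: both proofs implicitly require $T\setminus E\neq\varnothing$. This is automatic when $X$ has at least two points (expansiveness then forces some $t\in T\setminus E$ to exist), but in the degenerate case where $X$ has at most one point and $T\in\mathcal{I}$ no nonempty $F\subseteq T\setminus E$ exists at all; your bookkeeping step of adjoining one element of $T\setminus E$ to ensure $F\neq\varnothing$ silently uses this same nonemptiness, just as the paper's directed set does.
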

%%%%%%%%%%%%%%%%%%%%%%%%%%%%%%%%%%%%
\begin{proof}
Suppose there exists $E\in\mathcal{I}$ such that for each nonempty finite subset $F$ of
$T\setminus E$ there exists $x_F,y_F\in X$ such that $F(x_F,y_F)\subseteq\psi$ and $(x_F,y_F)\notin\beta$.
Consider directed set $(\mathcal{P}_{fin}(T\setminus E)\setminus\{\varnothing\},\subseteq)$, then in compact space $X\times X$ the net
$\{(x_F,y_F)\}_{F\in \mathcal{P}_{fin}(T\setminus E)\setminus\{\varnothing\}}$ has a convergent subnet like
$\{(x_{F_\lambda},y_{F_\lambda})\}_{\lambda\in\Lambda}$ to a point of $X\times X$ like $(z,w)$.
For each $\lambda\in\Lambda$, $(x_{F_\lambda},y_{F_\lambda})\in (X\times X)\setminus\beta$. Thus
$(z,w)\in\overline{(X\times X)\setminus\beta}=(X\times X)\setminus\beta$, i.e.
\begin{equation}\label{eq10}
(z,w)\notin \beta\:.
\end{equation}
On the other hand for each $t\in T\setminus E$, there exists $\lambda_t\in\Lambda$ such that $\{t\}\subseteq F_{\lambda_t}$.
For each $\lambda\geq\lambda_t$ we have $\{t\}\subseteq F_{\lambda_t}\subseteq F_{\lambda}$, thus
$(tx_{F_\lambda},ty_{F_\lambda})\in F_\lambda(x_{F_\lambda},y_{F_\lambda})\subseteq\psi$, therefore
$(tz,tw)\in\overline{\psi}=\psi$ (since $\{(tx_{F_\lambda},ty_{F_\lambda})\}_{\lambda\in\Lambda,\lambda\geq\lambda_t}$
is a net in $\psi$ converges to $(tz,tw)$). Since $(tz,tw)\in\psi$ for each $t\in T\setminus E$ and $\psi$ is an ${\mathcal I}-$expansive index of $X$, $z=w$
which is in contradiction with \ref{eq10}.
\end{proof}
%%%%%%%%%%%%%%%%%%%%%%%%%%%%%%%%%%%%
\section{Interaction between $\mathcal{I}-$shadowing, $\mathcal{I}-$expansiveness and $\mathcal{I}-$topological stability: a classic approach}
\noindent In this section consider uniform space $(X,\mathcal{K})$,
equip $C(X)$ and $Homeo(X)$ with induced uniform structure and induced topology of
$(F(X),\mathcal{H}_{\mathcal K})$.
\\
For $A\subseteq T$ and $\theta\in\mathcal{K}$ let:
\[\mathcal{C}(A,T,\theta):=\{((f_t)_{t\in T},(g_t)_{t\in T})\in F(X)^T\times F(X)^T:\forall t\in A\:\:(f_t,g_t)\in B_\theta\}\:,\]
then
$\mathfrak{U}_\mathcal{I}:=\{\alpha\subseteq F(X)^T\times F(X)^T:$ there exist $\lambda\in\mathcal{K}$ and $B\in\mathcal{I}$ such that
$\mathcal{C}(B,T,\lambda)\subseteq\alpha\}$ is uniform structure on $F(X)^T$.
For semigroup $T$ equip $C(X)^T$ and
$Homeo(X)^T$ uniform topology induced from $(F(X)^T,\mathfrak{U}_\mathcal{I})$, then
we may consider $Act(T,X)$ as a subspace of $C(X)^T$, if $T$ is a group then
we may consider $Act(T,X)$ as a subspace of $Homeo(X)^T$.
%%%%%%%%%%%%%%%%%%%%%%%%%%%%%%%%%%%%
\begin{remark}\label{ali10}
Let's compare $\mathfrak{U}_\mathcal{I}$ for ideals $\mathcal{I}=\{\varnothing\},{\mathcal{P}_{fin}(T)},{\mathcal{P}(T)}$.
\begin{itemize}
\item For each $\theta\in\mathcal{K}$, $\mathcal{C}(\varnothing,T,\theta)=F(X)^T\times F(X)^T$, thus uniform topology induced from 
	$\mathfrak{U}_{\{\varnothing\}}$ on $F(X)^T$ is trivial topology $\{F(X)^T,\varnothing\}$.
\item Uniform structure $\mathfrak{U}_{\mathcal{P}_{fin}(T)}$
	on $F(X)^T$ is compatible with its pointwise convergence (product) topology.
\item Uniform structure $\mathfrak{U}_{\mathcal{P}(T)}$
	on $F(X)^T$ is compatible with its uniform convergence topology
	(use a similar method described in Note~\ref{taha15}).
	Moreover $\mathfrak{U}_{\mathcal{P}(T)}=\{\alpha\subseteq F(X)^T\times F(X)^T:$ 
	there exists $\lambda\in\mathcal{K}$  such that
$\mathcal{C}(T,T,\lambda)\subseteq\alpha\}$.
\end{itemize}
\end{remark}
%%%%%%%%%%%%%%%%%%%%%%%%%%%%%%%%%%%%
%%%%%%%%%%%%%%%%%%%%%%%%%%%%%%%%%%%%
\begin{remark}\label{ali20}
If $X$ is discrete with $\Delta_X\in\mathcal{K}$, then 
\begin{itemize}
\item for all $A\in\mathcal{I}$ we have
$\mathcal{C}(A,T,\Delta_X)=\{((f_t)_{t\in T},(g_t)_{t\in T})\in F(X)^T\times F(X)^T:\forall t\in A\:\:f_t=g_t\}$
moreover $\mathfrak{U}_{\mathcal I}=\{\alpha\subseteq F(X)^T\times F(X)^T:$ 
	there exists $A\in\mathcal{I}$  such that
$\mathcal{C}(A,T,\Delta_X)\subseteq\alpha\}$,
\item
 $\mathcal{C}(T,T,\Delta_X)=\Delta_{F(X)^T}$, hence by Remark~\ref{ali10}, 
 $\mathfrak{U}_{\mathcal{P}(T)}=\{\alpha\subseteq F(X)^T\times F(X)^T:$ 
$\Delta_{F(X)^T}\subseteq\alpha\}$, which induced discrete topology on $F(X)^T$.
\end{itemize}
\end{remark}
%%%%%%%%%%%%%%%%%%%%%%%%%%%%%%%%%%%%
\begin{definition}\label{salam220}
We say
$\mathfrak{X}\in Act(T,(X,\mathcal{K}))$ is a topological stable action 
of semigroup $T$ on $X$ modulo ideal $\mathcal I$
(or briehly ${\mathcal I}-$topological stable action of semigroup $T$
on $X$) if for each $\theta\in\mathcal{K}$ there exists open neighbourhood $U$ of $\mathfrak{X}$ in $(F(X)^T,\mathfrak{U}_\mathcal{I})$ such that for each $\mathfrak{Z}\in U
\cap Act(T,X)$
there exists homomorphism $f:(T,X,\mathfrak{Z})\to(T , X , \mathfrak{X})$
(i.e. $f:X\to X$ is continuous and $f(tx)=tf(x)$ for all $t\in T,x\in X$) with
$(f,id_X)\in B_\theta$.
\\
We say $\mathfrak{X}$ is topological stable if it is $\mathcal{P}_{fin}(T)-$topological
stable.
\end{definition}
%%%%%%%%%%%%%%%%%%%%%%%%%%%%%%%%%%%%
\begin{theorem}\label{ali30}
Suppose $X$ is discrete with $\Delta_X\in\mathcal{K}$. $\mathfrak{X}\in Act(T,X)$ is $\mathcal{I}-$topological stable
if and only if $\{\mathfrak{X}\}$ is an isolated point of $Act(T,X)$ (with induced uniform topology of  $(F(X)^T,\mathfrak{U}_\mathcal{I})$).
\end{theorem}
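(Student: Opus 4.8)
The plan is to exploit the hypothesis $\Delta_X\in\mathcal{K}$, which (by upward closure of a uniform structure together with $\Delta_X\subseteq\alpha$ for every $\alpha\in\mathcal{K}$) forces $\mathcal{K}=\{\alpha\subseteq X\times X:\Delta_X\subseteq\alpha\}$ to be the discrete uniformity, so that $\Delta_X$ is the smallest entourage. The effect of this on Definition~\ref{salam220} is that the quantifier ``for each $\theta\in\mathcal{K}$'' reduces to the single critical case $\theta=\Delta_X$: any larger $\theta$ is a superset of $\Delta_X$, and the corresponding requirement is then weaker.

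First I would record the two pointwise observations that drive everything. Since $B_{\Delta_X}=\{(f,g)\in F(X)\times F(X):\forall x\in X\:(f(x),g(x))\in\Delta_X\}=\Delta_{F(X)}$, the requirement $(f,id_X)\in B_{\Delta_X}$ in Definition~\ref{salam220} is equivalent to $f=id_X$. Next, because $X$ is discrete every self-map is continuous, so $id_X$ is a homomorphism $(T,X,\mathfrak{Z})\to(T,X,\mathfrak{X})$ precisely when $id_X(t\cdot_{\mathfrak Z}x)=t\cdot_{\mathfrak X}id_X(x)$ for all $t,x$, i.e. precisely when $\mathfrak{Z}=\mathfrak{X}$ as actions. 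Combining these, for $\theta=\Delta_X$ the existence of the required homomorphism for a given $\mathfrak{Z}\in Act(T,X)$ is equivalent to $\mathfrak{Z}=\mathfrak{X}$.

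With these equivalences in hand, both implications are short. For the forward direction, assuming $\mathfrak{X}$ is $\mathcal{I}$-topological stable I would apply the definition with $\theta=\Delta_X$ to obtain an open neighbourhood $U$ of $\mathfrak{X}$ in $(F(X)^T,\mathfrak{U}_\mathcal{I})$ for which every $\mathfrak{Z}\in U\cap Act(T,X)$ carries a homomorphism $f$ to $\mathfrak{X}$ with $(f,id_X)\in B_{\Delta_X}$; by the above this forces $f=id_X$ and hence $\mathfrak{Z}=\mathfrak{X}$, so $U\cap Act(T,X)=\{\mathfrak{X}\}$ and $\{\mathfrak{X}\}$ is isolated. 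Conversely, if $\{\mathfrak{X}\}$ is isolated, fix an open $U$ with $U\cap Act(T,X)=\{\mathfrak{X}\}$; for an arbitrary $\theta\in\mathcal{K}$ this same $U$ witnesses the definition, since the only $\mathfrak{Z}\in U\cap Act(T,X)$ is $\mathfrak{X}$ itself, for which $f=id_X$ is a homomorphism and $(id_X,id_X)\in B_\theta$ holds because $\Delta_X\subseteq\theta$.

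I do not expect a genuine obstacle here; the content is entirely in the two reductions of the second paragraph, namely translating $(f,id_X)\in B_{\Delta_X}$ into $f=id_X$ and translating ``$id_X$ is a homomorphism'' into ``$\mathfrak{Z}=\mathfrak{X}$''. The only point requiring a little care is the legitimacy of restricting attention to $\theta=\Delta_X$, which rests on $\Delta_X$ being the minimum entourage of the discrete uniformity; once that is justified (and Remark~\ref{ali20} already describes $\mathfrak{U}_\mathcal{I}$ through the entourages $\mathcal{C}(A,T,\Delta_X)$), the equivalence of $\mathcal{I}$-topological stability with isolation of $\{\mathfrak{X}\}$ is immediate.
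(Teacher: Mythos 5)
Your proof is correct and follows essentially the same route as the paper: the forward direction applies the stability definition at the single entourage $\theta=\Delta_X$, uses $B_{\Delta_X}=\Delta_{F(X)}$ to force $f=id_X$ and hence $\mathfrak{Z}=\mathfrak{X}$, while the converse uses the isolating neighbourhood together with $id_X$ and $(id_X,id_X)\in B_\theta$ (valid since $\Delta_X\subseteq\theta$ for every $\theta\in\mathcal{K}$). The preliminary observation that $\Delta_X\in\mathcal{K}$ makes $\mathcal{K}$ the discrete uniformity, so that one may reduce to $\theta=\Delta_X$, is a harmless elaboration of what the paper leaves implicit.
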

%%%%%%%%%%%%%%%%%%%%%%%%%%%%%%%%%%%%
\begin{proof}
Suppose $\mathfrak{X}$ is an isolated point of  $Act(T,X)$ in $(F(X)^T,\mathfrak{U}_\mathcal{I})$, then there exists open neighbourhood of
$\mathfrak{X}$ in $(F(X)^T,\mathfrak{U}_\mathcal{I})$ such that $U\cap Act(T,X)=\{\mathfrak{X}\}$. For all $\theta\in\mathcal{K}$
and $\mathfrak{Z}\in U\cap Act(T,X)$ we have $\mathfrak{Z}=\mathfrak{X}$ and 
$id_X:(T,X,\mathfrak{Z})=(T,X,\mathfrak{X})\to(T , X , \mathfrak{X})$ with $(id_X,id_X)\in B_\theta$.
Therefore $\mathfrak{X}$ is $\mathcal{I}-$topological stable
\\
Now suppose $\mathfrak{X}'\in Act(T,X)$ is $\mathcal{I}-$topological stable. Choose open neighbourhood 
$V$ of $\mathfrak{X}'$ in $(F(X)^T,\mathfrak{U}_\mathcal{I})$ such that for each $\mathfrak{Z}\in V\cap Act(T,X)$ there exists
homomorphism $f_\mathfrak{Z}:(T,X,\mathfrak{Z})\to(T , X , \mathfrak{X}')$ with $(f_\mathfrak{Z},id_X)\in B_{\Delta_X}$,
thus $f_\mathfrak{Z}=id_X$ which leads to $\mathfrak{X}'=\mathfrak{Z}$. Hence $V\cap Act(T,X)=\{\mathfrak{X}'\}$
and $\mathfrak{X}'$ is an isolated point of $Act(T,X)$ with induced uniform topology of  $(F(X)^T,\mathfrak{U}_\mathcal{I})$.
\end{proof}
%%%%%%%%%%%%%%%%%%%%%%%%%%%%%%%%%%%%
\noindent By the following example topological stability and $\mathcal{I}-$topological stability are not equivalent in general.
%%%%%%%%%%%%%%%%%%%%%%%%%%%%%%%%%%%%
\begin{example}
Equip $X=\mathbb{Z}_2=T$ with discrete topology (where $\mathbb{Z}_2$ is qoutient group $\frac{\mathbb{Z}}{2\mathbb{Z}}$). Then
$Act (T,X)$ has two elements and:
\begin{itemize}
\item by Remark~\ref{ali20} and Theorem~\ref{ali30} all elements of $Act (T,X)$ are $\mathcal{P}(T)-$topological stable
(and topological stable),
\item by Remark~\ref{ali20} and Theorem~\ref{ali30}, $Act (T,X)$ does not have any $\{\varnothing\}-$topological stable element.
\end{itemize}
\end{example}
%%%%%%%%%%%%%%%%%%%%%%%%%%%%%%%%%%%%
\begin{lemma}\label{salam260}
Suppose $\theta\in\mathcal{K}$, $A\subseteq T$, $x\in X$ and $\mathfrak{X},\mathfrak{Z}\in Act(T,X)
(\subseteq C(X)^T)$ with $(\mathfrak{X},\mathfrak{Z})\in \mathcal{C}(A,T,\theta)$. Then $(\mathfrak{Z}^tx)_{t\in T}$
is a $\theta-$pseudo orbit with respect to $A$ in $(T,X,\mathfrak{X})$.
\end{lemma}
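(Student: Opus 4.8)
The plan is to unwind the three layers of definitions—$\mathcal{C}(A,T,\theta)$, $B_\theta$, and pseudo orbit—and observe that the claim then reduces to a single invocation of the hypothesis. First I would record what membership in $\mathcal{C}(A,T,\theta)$ says: writing the actions componentwise as $\mathfrak{X}=(\mathfrak{X}^t)_{t\in T}$ and $\mathfrak{Z}=(\mathfrak{Z}^t)_{t\in T}$, the assumption $(\mathfrak{X},\mathfrak{Z})\in\mathcal{C}(A,T,\theta)$ means $(\mathfrak{X}^a,\mathfrak{Z}^a)\in B_\theta$ for every $a\in A$, which by the definition of $B_\theta$ unwinds to
\[\forall a\in A\:\forall y\in X\:\:(\mathfrak{X}^a(y),\mathfrak{Z}^a(y))\in\theta\:.\]

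Next I would put $x_t:=\mathfrak{Z}^tx$ and verify the pseudo orbit condition of Definition~\ref{salam110} for the sequence $(x_t)_{t\in T}$ in the system $(T,X,\mathfrak{X})$, i.e.\ that $(\mathfrak{X}^ax_t,x_{at})\in\theta$ for all $a\in A$ and $t\in T$. The one structural fact I need is that $\mathfrak{Z}$ is an action, so its associativity axiom $(at)x=a(tx)$ yields
\[x_{at}=\mathfrak{Z}^{at}x=\mathfrak{Z}^a(\mathfrak{Z}^tx)=\mathfrak{Z}^ax_t\:.\]
Applying the displayed hypothesis with $y:=x_t=\mathfrak{Z}^tx$ then gives, for each $a\in A$ and $t\in T$,
\[(\mathfrak{X}^ax_t,x_{at})=(\mathfrak{X}^a(\mathfrak{Z}^tx),\mathfrak{Z}^a(\mathfrak{Z}^tx))\in\theta\:,\]
which is exactly the required pseudo orbit inequality, so the proof concludes.

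I do not expect a genuine obstacle here; the lemma is essentially a bookkeeping statement linking the product uniformity $\mathfrak{U}_\mathcal{I}$ to the pseudo orbit notion. The only point deserving care is keeping the two actions distinct: the sequence $(x_t)_{t\in T}$ is the genuine $\mathfrak{Z}$-orbit of $x$, whereas the pseudo orbit inequality is tested against the \emph{other} action $\mathfrak{X}$. Consequently the associativity collapse $x_{at}=\mathfrak{Z}^ax_t$ must be carried out inside $\mathfrak{Z}$ (the sequence is generally not an $\mathfrak{X}$-orbit), and the hypothesis $(\mathfrak{X},\mathfrak{Z})\in\mathcal{C}(A,T,\theta)$ is precisely what measures, uniformly over the base point, how far the $\mathfrak{X}$-image $\mathfrak{X}^ax_t$ sits from the $\mathfrak{Z}$-successor $x_{at}=\mathfrak{Z}^ax_t$.
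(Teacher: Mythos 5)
Your proposal is correct and follows essentially the same argument as the paper: unwind $\mathcal{C}(A,T,\theta)$ into the pointwise condition $(\mathfrak{X}^ay,\mathfrak{Z}^ay)\in\theta$, specialize to $y=\mathfrak{Z}^tx$, and use the action identity $\mathfrak{Z}^a\mathfrak{Z}^tx=\mathfrak{Z}^{at}x$ to recognize the pseudo orbit condition. Nothing is missing; the remark about keeping the two actions distinct is a fair point of care but does not change the argument.
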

%%%%%%%%%%%%%%%%%%%%%%%%%%%%%%%%%%%%
\begin{proof}
For each $a\in A$, $(\mathfrak{X}^a,\mathfrak{Z}^a)\in B_\theta$, thus for each $y\in X$, $(\mathfrak{X}^ay,\mathfrak{Z}^ay)\in\theta$.
In particular for each $t\in T$, $(\mathfrak{X}^a\mathfrak{Z}^tx,\mathfrak{Z}^a\mathfrak{Z}^tx)\in\theta$,
i.e. $(\mathfrak{X}^a\mathfrak{Z}^tx,\mathfrak{Z}^{at}x)\in\theta$, which leads to the desired result since $a\in A$ is arbitrary.
\end{proof}
%%%%%%%%%%%%%%%%%%%%%%%%%%%%%%%%%%%%
%%%%%%%%%%%%%%%%%%%%%%%%%%%%%%%%%%%%
\begin{theorem}\label{salam270}
Every ${\mathcal I}-$expansive, compact Hausdorff transformation (semi)group with ${\mathcal I}-$shadowing property, is ${\mathcal I}-$topological stable.
\end{theorem}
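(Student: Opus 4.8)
The plan is to adapt the classical Walters argument ($\text{shadowing}+\text{expansivity}\Rightarrow\text{topological stability}$) to the ideal-constrained setting, the central object being a canonical tracing map. Fix $\theta_0\in\mathcal K$; the goal is to produce an open neighbourhood $U$ of $\mathfrak X$ in $(F(X)^T,\mathfrak U_{\mathcal I})$ so that every nearby action admits an equivariant continuous map into $(T,X,\mathfrak X)$ that is $\theta_0$-close to the identity. Since $(T,X)$ has the $\mathcal I$-shadowing property, first fix a nonempty $A\in\mathcal I$ for which shadowing with respect to $A$ holds, and let $\psi$ be an $\mathcal I$-expansive index. Because $X$ is compact Hausdorff, I would choose $\mu\in\mathcal K$ with $\mu^{-1}\circ\mu\subseteq\psi$ and $\mu\subseteq\theta_0$, and then a \emph{closed} entourage $\lambda\in\mathcal K$ with $\lambda\subseteq\mu$ (the existence of arbitrarily small closed entourages on a compact Hausdorff uniform space is exactly the fact invoked in the proof of Note~\ref{taha30}(c)); thus $\lambda$ is closed, $\lambda\subseteq\theta_0$, and $\lambda^{-1}\circ\lambda\subseteq\psi$. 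Applying shadowing with respect to $A$ to $\lambda$ yields $\theta\in\mathcal K$ such that every $\theta$-pseudo orbit with respect to $A$ has a $\lambda$-trace, which is unique by Lemma~\ref{salam230}.

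Next I would fix the neighbourhood and define the map. Since $A\in\mathcal I$, the set $\mathcal C(A,T,\theta)$ is a basic entourage of $\mathfrak U_{\mathcal I}$, so $\mathcal C(A,T,\theta)[\mathfrak X]$ is a neighbourhood of $\mathfrak X$; take $U$ to be its interior. For any $\mathfrak Z\in U\cap Act(T,X)$ we have $(\mathfrak X,\mathfrak Z)\in\mathcal C(A,T,\theta)$, so by Lemma~\ref{salam260}, for every $x\in X$ the sequence $(\mathfrak Z^t x)_{t\in T}$ is a $\theta$-pseudo orbit with respect to $A$ in $(T,X,\mathfrak X)$ and hence has a unique $\lambda$-trace; define $f(x)$ to be that trace, i.e. the unique point with $(\mathfrak X^t f(x),\mathfrak Z^t x)\in\lambda$ for all $t\in T$. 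Closeness to the identity is then immediate: evaluating the trace condition at $t=e$ gives $(f(x),x)\in\lambda\subseteq\theta_0$, so $(f,id_X)\in B_{\theta_0}$. Equivariance follows from uniqueness: for $s\in T$, the trace condition for $f(x)$ read off at the indices $ts$ shows that $\mathfrak X^s f(x)$ is a $\lambda$-trace of $(\mathfrak Z^{ts}x)_{t\in T}=(\mathfrak Z^t(\mathfrak Z^s x))_{t\in T}$, which is itself a $\theta$-pseudo orbit with respect to $A$ (Lemma~\ref{salam260} at the point $\mathfrak Z^s x$); since that trace is unique and also equals $f(\mathfrak Z^s x)$, we get $f(\mathfrak Z^s x)=\mathfrak X^s f(x)$.

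The remaining and most delicate step is continuity of $f$, and this is where closedness of $\lambda$ and compactness of $X$ are essential. I would argue by nets: given $x_i\to x$, use compactness to select from any cluster point $y$ of $\{f(x_i)\}$ a subnet with $f(x_{i_j})\to y$. For each fixed $t\in T$, continuity of $\mathfrak X^t$ and $\mathfrak Z^t$ gives $\mathfrak X^t f(x_{i_j})\to\mathfrak X^t y$ and $\mathfrak Z^t x_{i_j}\to\mathfrak Z^t x$, so the pairs $(\mathfrak X^t f(x_{i_j}),\mathfrak Z^t x_{i_j})\in\lambda$ converge to $(\mathfrak X^t y,\mathfrak Z^t x)\in\overline\lambda=\lambda$. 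Thus $y$ is a $\lambda$-trace of $(\mathfrak Z^t x)_{t\in T}$, and uniqueness forces $y=f(x)$; since $\{f(x_i)\}$ then has the single cluster point $f(x)$ in the compact space $X$, it converges to $f(x)$, proving $f$ continuous. This yields the required homomorphism $f\colon(T,X,\mathfrak Z)\to(T,X,\mathfrak X)$ with $(f,id_X)\in B_{\theta_0}$, and as $\theta_0\in\mathcal K$ was arbitrary, $\mathfrak X$ is $\mathcal I$-topologically stable. I expect the continuity passage to be the main obstacle: the algebraic verifications are routine once $\lambda$ is in place, so the real care is in arranging $\lambda$ to be closed and small \emph{before} invoking shadowing, and in the compactness-plus-uniqueness limit argument.
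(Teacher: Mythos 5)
Your proof is correct, and although it shares the inevitable Walters-type skeleton with the paper's proof (shadowing produces a tracing map $f$, expansivity makes traces unique, then one checks closeness, equivariance, and continuity), the two delicate steps are handled by genuinely different means. For equivariance the paper does not invoke uniqueness of traces at all: it shows $(\mathfrak{X}^t\mathfrak{X}^h f(x),\mathfrak{X}^t f(\mathfrak{Z}^h x))\in\beta\circ\beta\subseteq\psi$ for every $t\in T$ and then uses Lemma~\ref{salam250} to force the two points to be $\eta$-close for every $\eta\in\mathcal{K}$, hence equal; you instead observe that $\mathfrak{X}^s f(x)$ is itself a $\lambda$-trace of the shifted pseudo-orbit $(\mathfrak{Z}^{ts}x)_{t\in T}$ and conclude by uniqueness (Lemma~\ref{salam230}), which is shorter and closer to the classical argument. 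For continuity the paper again routes through Lemma~\ref{salam250}: it needs a \emph{closed} expansive index $\psi$, the already-proved equivariance, and uniform continuity of the finitely many maps $\mathfrak{Z}^t$, and it obtains uniform continuity of $f$ directly; you instead push the closedness requirement onto the trace entourage $\lambda$ (legitimate, since closed entourages form a base on a compact Hausdorff space, the same fact the paper uses in Note~\ref{taha30}(c)) and run a cluster-point argument: any cluster point of $(f(x_i))$ is a $\lambda$-trace of the limiting pseudo-orbit, hence equals $f(x)$ by uniqueness, and compactness upgrades the unique cluster point to convergence. The net effect is that your proof bypasses Lemma~\ref{salam250} entirely and never needs the expansive index to be closed, at the price of the closed-entourage refinement and of concluding plain continuity rather than explicit uniform continuity (on a compact space these coincide, so nothing is lost); the paper's version, by contrast, reuses its Section~6 machinery and stays quantitative throughout. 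Both arguments are complete and correct, and your verification of the auxiliary points (that $\mathcal{C}(A,T,\theta)$ is an entourage of $\mathfrak{U}_{\mathcal{I}}$ because $A\in\mathcal{I}$, that $\lambda^{-1}\circ\lambda\subseteq\psi$ guarantees uniqueness, and that evaluation at $t=e$ gives $(f,id_X)\in B_{\theta_0}$) matches what the paper needs.
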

%%%%%%%%%%%%%%%%%%%%%%%%%%%%%%%%%%%%
\begin{proof}
In uniform space $(X,\mathcal{K})$ consider ${\mathcal I}-$expansive action $\mathfrak{X}\in Act(T,X)$ with ${\mathcal I}-$expansive index $\psi\in\mathcal{K}$ we may suppose
$\psi$ is a closed index
also suppose $(T,X,\mathfrak{X})$ has ${\mathcal I}-$shadowing property.
Consider $\mu\in\mathcal{K}$ there exists nonempty set $A\in\mathcal{I}$ such that
$(T,X,\mathfrak{X})$ has shadowing property with respect to $A$.
Choose open index $\beta^{-1}=\beta\in\mathcal{K}$ such that $\beta\circ\beta\circ\beta\subseteq \psi\cap\mu$. There exists $\theta\in\mathcal{K}$ such that
each $\theta-$pseudo orbit with respect to $A$ in $(T,X,\mathfrak{X})$ has a $\beta-$trace. By Corollary~\ref{salam240}
each $\theta-$pseudo orbit with respect to $A$ in $(T,X,\mathfrak{X})$ has a unique $\beta-$trace.
There exists open neighbourhood $U$ of $\mathfrak{X}$ (in $F(X)^T$) such that $U\times U\subseteq \mathcal{C}(A,T,\theta)$.
Choose $\mathfrak{Z}\in U\cap Act(T,X)$, so $(\mathfrak{X},\mathfrak{Z})\in \mathcal{C}(A,T,\theta)$.
For each $x\in X$, by Lemma~\ref{salam260}, $(\mathfrak{Z}^tx)_{t\in T}$
is a $\theta-$pseudo orbit with respect to $A$ in $(T,X,\mathfrak{X})$ so it has a unique $\beta-$trace like $f(x)\in X$,
i.e.,
\begin{equation}\label{eq20}
\forall x\in X\:\: \forall t\in T\:\: (\mathfrak{X}^tf(x),\mathfrak{Z}^tx)\in\beta\:.
\end{equation}
In particular $(f(x),x)=(\mathfrak{X}^ef(x),\mathfrak{Z}^ex)\in\beta$ for all $x\in X$, hence
\begin{equation}\label{eq25}
(f,id_X)\in B_\beta\subseteq B_\mu\:.
\end{equation}
for each $x\in X$, $h\in T$, and $t\in T$ by \ref{eq20} we have
\[(\mathfrak{X}^tf(\mathfrak{Z}^hx),\mathfrak{Z}^t\mathfrak{Z}^hx)=(\mathfrak{X}^tf(\mathfrak{Z}^hx),\mathfrak{Z}^{th}x)\in\beta\:,
(\mathfrak{X}^{th}f(x),\mathfrak{Z}^{th}x)\in\beta\]
therefore
\begin{equation}\label{eq30}
(\mathfrak{X}^{t}\mathfrak{X}^hf(x),\mathfrak{X}^tf(\mathfrak{Z}^hx))=(\mathfrak{X}^{th}f(x),\mathfrak{X}^tf(\mathfrak{Z}^hx))\in\beta\circ\beta\subseteq\psi\:.
\end{equation}
For each $\eta\in\mathcal{K}$, by Lemma~\ref{salam250}
there exists nonempty finite subset $F$ of $T$ such that for all $z,w\in X$ with
$\{(\mathfrak{X}^tz,\mathfrak{X}^tw):t\in F\}\subseteq\psi$ we
have $(z,w)\in\eta$. By~\ref{eq30}, $\{(\mathfrak{X}^{t}\mathfrak{X}^hf(x),\mathfrak{X}^tf(\mathfrak{Z}^hx)):t\in F\}\subseteq\psi$
which leads to
\[(\mathfrak{X}^hf(x),f(\mathfrak{Z}^hx))\in \eta\]
Hence $(\mathfrak{X}^hf(x),f(\mathfrak{Z}^hx))\in\bigcap\{\eta:\eta\in\mathcal{K}\}=\Delta_X$, so
\begin{equation}\label{eq35}
\forall h\in T\:\:\forall x\in X\:\:
\mathfrak{X}^hf(x)=f(\mathfrak{Z}^hx)\:.
\end{equation}
For each $\xi\in\mathcal{K}$, by Lemma~\ref{salam250}
there exists nonempty finite subset $E$ of $T$ such that for all $z,w\in X$ with
$\{(\mathfrak{X}^tz,\mathfrak{X}^tw):t\in E\}\subseteq\psi$ we
have $(z,w)\in\xi$. For each $t\in E$, $\mathfrak{Z}^t:(X,\mathcal{K})\to (X,\mathcal{K})$ is continuous, hence it is
uniformly continuous by compactness of $ (X,\mathcal{K})$, by finiteness of $E$ there exists $\phi\in\mathcal{K}$ such that
\begin{equation}\label{eq40}
\{(\mathfrak{Z}^tz,\mathfrak{Z}^tw):t\in E, (z,w)\in\phi\}\subseteq\xi\cap\beta\:.
\end{equation}
For each $(z,w)\in \phi$ and $t\in E$ we have:
\begin{itemize}
\item $(\mathfrak{Z}^tz,\mathfrak{Z}^tw)\in\beta$ by \ref{eq40},
\item $(f(\mathfrak{Z}^tz),\mathfrak{Z}^tz),(f(\mathfrak{Z}^tw),\mathfrak{Z}^tw)\in \beta$ by \ref{eq25},
\end{itemize}
hence $(\mathfrak{X}^tf(z),\mathfrak{X}^tf(w))=(f(\mathfrak{Z}^tz),f(\mathfrak{Z}^tw))\in\beta\circ\beta\circ\beta\subseteq\psi$ by \ref{eq35}.
\\
Since $\{(\mathfrak{X}^tf(z),\mathfrak{X}^tf(w)):t\in E,(z,w)\in\phi\}\subseteq \psi$ we have
$\{(f(z),f(w)):(z,w)\in\phi\}\subseteq\xi$, therefore $f:(X,\mathcal{K})\to(X,\mathcal{K})$ is uniformly continuous, so it is continuous.
\end{proof}
%%%%%%%%%%%%%%%%%%%%%%%%%%%%%%%%%%%%
%%%%%%%%%%%%%%%%%%%%%%%%%%%%%%%%%%%%
%%%%%%%%%%%%%%%%%%%%%%%%%%%%%%%%%%%%
%%%%%%%%%%%%%%%%%%%%%%%%%%%%%%%%%%%%
%%%%%%%%%%%%%%%%%%%%%%%%%%%%%%%%%%%%
%%%%%%%%%%%%%%%%%%%%%%%%%%%%%%%%%%%%
%%%%%%%%%%%%%%%%%%%%%%%%%%%%%%%%%%%%
%%%%%%%%%%%%%%%%%%%%%%%%%%%%%%%%%%%%

%
%%%%%%%%%%%%%%%%%%%%%%%%%%%%%%%%%%%%
 %%%%%%%%%%%%%%%%%%%%%%%%%%%%%%%%%%%%
\noindent \noindent {\small {\bf Fatemah Ayatollah Zadeh Shirazi}, Faculty
of Mathematics, Statistics and Computer Science, College of
Science, University of Tehran, Enghelab Ave., Tehran, Iran
\linebreak (f.a.z.shirazi@ut.ac.ir)}
\\
{\small {\bf Elaheh Hakimi}, Faculty of Mathematics, Statistics
and Computer Science, College of Science, University of Tehran,
Enghelab Ave., Tehran, Iran (elaheh.hakimi@gmail.com)}
\\
{\small {\bf Arezoo Hosseini},
Department of Mathematics, Education Farhangian University,
\linebreak
P.~O.~Box 14665--889, Tehran, Iran
(a.hosseini@cfu.ac.ir)}
\\
{\small {\bf Khosro Tajbakhsh}, Department of Mathematics, Faculty of Mathematical Sciences, Tarbiat Modares
University, Tehran 14115-134, Iran (khtajbakhsh@modares.ac.ir)}
%\[\underline{\SP\SP\SP\SP\SP\SP\SP\SP\SP\SP\SP\SP\SP\SP\SP\SP}\]
 \end{document}